\newtheorem{thm}{Theorem}[section]
\newtheorem{cor}[thm]{Corollary}
\newtheorem{lem}[thm]{Lemma}
\theoremstyle{definition}
\newtheorem{rem}[thm]{Remark}
\newtheorem{exa}[thm]{Example}
\numberwithin{equation}{section}
\newcommand{\R}{\mathbb{R}}
\newcommand{\C}{\mathbb{C}}
\newcommand{\X}{\mathcal{X}}
\newcommand{\XM}{\X(M)}
\newcommand{\XU}{\X(U)}
\newcommand{\const}{\operatorname{const}}
\newcommand{\D}{\operatorname{D}}
\newcommand{\DD}{\mathcal{D}}
\newcommand{\Span}{\operatorname{span}}
\newcommand{\FiXiEta}{{(\varphi,\xi,\eta)}}
\newcommand{\FI}{{\varphi}}
\newcommand{\rank}{\operatorname{rank}}
\newcommand{\Jt}{\widetilde{J}}
\newcommand{\Ct}{\widetilde{\mathbb{C}}}
\begin{document}


\baselineskip=17pt



\title[Parallel Almost Paracontact Structures...]{Parallel Almost Paracontact Structures on Affine Hypersurfaces}

\author[Z. Szancer]{Zuzanna Szancer}
\address{Department of Applied Mathematics \\ University of Agriculture in Krakow\\ 253c Balicka Street\\
30-198 Krakow, Poland}
\email{Zuzanna.Szancer@urk.edu.pl}

\date{}

\begin{abstract}
Let $\Jt$ be the canonical para-complex structure on $\R^{2n+2}\simeq\widetilde{\mathbb{C}}^{n+1}$. We study real affine hypersurfaces $f\colon M\rightarrow \widetilde{\mathbb{C}}^{n+1}$ with a $\Jt$-tangent transversal vector field. Such vector field induces in a natural way an almost paracontact structure $\FiXiEta$ on $M$ as well as the affine connection $\nabla$. In this paper we give the classification of hypersurfaces with the property that
$\varphi$ or $\eta$ is parallel relative to the connection $\nabla$. Moreover, we show that if $\nabla\varphi=0$
(respectively $\nabla\eta=0$) then around each point of $M$ there exists a parallel almost paracontact structure. Results we illustrate with some examples.
\end{abstract}

\subjclass[2010]{53A15, 53D15}

\keywords{para-complex structure, affine hypersurface, almost paracontact structure, parallel structure}

\maketitle

\section{Introduction}
\par Para-complex and paracontact structures were studied by many authors over past decades. These structures play an important role in pseudo-Riemannian
geometry as well as modern mathematical physics. In particular, some recent results related to paracontact geometry can be found in \cite{Z,CKM,KM}.
Moreover, recently some relations between para-complex and affine differential geometry were also studied (see \cite{LS,CLS} and \cite{SZ}).
\par If we denote by $\Jt$ the canonical para-complex structure on $\R^{2n+2}\simeq\widetilde{\mathbb{C}}^{n+1}$ then, in a similar way like in the
complex case (\cite{Cruceanu,SzanSzan,SZ2}), one may consider affine hypersurfaces $f\colon M\rightarrow \widetilde{\mathbb{C}}^{n+1}$ with a $\Jt$-tangent transversal vector field. Some recent results for affine hypersurfaces with a $\Jt$-tangent transversal vector field
can be found in \cite{S1,S3}.
\par In \cite{S4} the author studied real affine hypersurfaces of the complex space $\C^{n+1}$ with a $J$-tangent transversal vector field and an induced almost contact structure $\FiXiEta$ such that $\varphi$ or $\eta$ is parallel relative to the induced
affine connection. Now, it is natural to ask what happens in a para-complex situation. It is worth highlighting that in this case we do not have the canonical
$\Jt$-tangent transversal vector field (the Riemannian normal field in general is not $\Jt$-tangent), so the situation is more complex.
\par In Section 2 we briefly recall basic formulas of affine differential geometry.
\par In Section 3 we recall the definition of an almost paracontact structure introduced for the first time in \cite{KW}. We recall the notion of a $\Jt$-tangent transversal vector field and a $\Jt$-invariant distribution. We also recall some results obtained in \cite{S1,S3} for induced almost paracontact structures which we will use in the next section.
\par Section 4 contains main results of this paper. First we show some basic relations which hold among induced objects under an additional condition
 that either $\varphi$ or $\eta$ is $\nabla$-parallel.
 Later we show that if any of the above is satisfied for some $\Jt$-tangent transversal vector field then we can
 always find (at least locally) another $\Jt$-tangent transversal vector field such that the induced almost paracontact structure is parallel.
 Finally, we provide the full local classification of the above mentioned hypersurfaces. In order to illustrate the results some examples are also given.
 In particular we show that (contrary to the case when $\nabla\eta=0$ or $\nabla\varphi=0$) the condition $\nabla\xi=0$ is much more weaker.
\section{Preliminaries}
We briefly recall the basic formulas of affine differential
geometry. For more details, we refer to \cite{NS}. Let $f\colon M\rightarrow\R^{n+1}$ be an orientable,
connected differentiable $n$-dimensional hypersurface immersed in
affine space $\R^{n+1}$ equipped with its usual flat connection
$\D$. Then, for any transversal vector field $C$ we have
\begin{equation}\label{eq::FormulaGaussa}
\D_Xf_\ast Y=f_\ast(\nabla_XY)+h(X,Y)C
\end{equation}
and
\begin{equation}\label{eq::FormulaWeingartena}
\D_XC=-f_\ast(SX)+\tau(X)C,
\end{equation}
where $X,Y$ are vector fields tangent to $M$. The formulas (\ref{eq::FormulaGaussa}) and (\ref{eq::FormulaWeingartena}) are called the formula of Gauss and the formula of Weingarten, respectively. For any transversal vector field $\nabla$ is a torsion-free connection, $h$ is a symmetric
bilinear form on $M$, called the second fundamental form, $S$ is a tensor of type $(1,1)$, called the shape operator, and $\tau$ is a 1-form, called the transversal connection form.
\par We shall now consider the change of a transversal vector field for a given immersion $f$.

\begin{thm}[\cite{NS}]\label{tw::ChangeOfTransversalField}
Suppose we change a transversal vector field $C$ to
$$
\overline{C}=\Phi C+f_\ast(Z),
$$
where $Z$ is a tangent vector field on $M$ and $\Phi$ is a nowhere vanishing function on $M$. Then the affine fundamental form,
the induced connection, the transversal connection form, and the affine shape operator change as follows:
\begin{align*}
& \overline{h}=\frac{1}{\Phi}h,\\
& \overline{\nabla}_XY=\nabla_XY-\frac{1}{\Phi}h(X,Y)Z,\\
& \overline{\tau}=\tau+\frac{1}{\Phi}h(Z,\cdot)+d\ln|\Phi|,\\
& \overline{S}=\Phi S-\nabla_{\cdot}Z+\overline{\tau}(\cdot)Z.
\end{align*}
\end{thm}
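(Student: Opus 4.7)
The plan is to derive all four transformation formulas simultaneously by substituting $\overline{C}=\Phi C+f_\ast(Z)$ into the Gauss and Weingarten identities (\ref{eq::FormulaGaussa})--(\ref{eq::FormulaWeingartena}) and then exploiting the uniqueness of the decomposition
\[
T_{f(p)}\R^{n+1}=f_\ast(T_pM)\oplus \R\cdot C_p
\]
into a tangential part and a transversal part along $C$. Since $\Phi$ is nowhere vanishing, $\overline{C}$ is genuinely transversal, so both decompositions (via $C$ and via $\overline{C}$) are available and equivalent. Once the same vector is written in both ways, matching the $f_\ast$-part and the $C$-part separately produces each of the four claimed identities.

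For the Gauss formula I would rewrite (\ref{eq::FormulaGaussa}) with the new transversal field as
\[
\D_Xf_\ast Y=f_\ast(\overline{\nabla}_XY)+\overline{h}(X,Y)\bigl(\Phi C+f_\ast(Z)\bigr)=f_\ast\!\bigl(\overline{\nabla}_XY+\overline{h}(X,Y)Z\bigr)+\Phi\,\overline{h}(X,Y)\,C,
\]
and equate it with $f_\ast(\nabla_XY)+h(X,Y)C$. The $C$-component immediately gives $\overline{h}=h/\Phi$, after which the $f_\ast$-component gives the formula for $\overline{\nabla}$.

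For the Weingarten formula I would compute $\D_X\overline{C}$ directly via the Leibniz rule, using (\ref{eq::FormulaGaussa}) for $\D_Xf_\ast(Z)$ (with the tangent field $Z$) and (\ref{eq::FormulaWeingartena}) for $\D_XC$:
\[
\D_X\overline{C}=X(\Phi)\,C+\Phi\,\D_XC+\D_Xf_\ast(Z)=f_\ast\!\bigl(-\Phi SX+\nabla_XZ\bigr)+\bigl(X(\Phi)+\Phi\tau(X)+h(X,Z)\bigr)C.
\]
Comparing with
\[
\D_X\overline{C}=-f_\ast(\overline{S}X)+\overline{\tau}(X)\overline{C}=f_\ast\!\bigl(-\overline{S}X+\overline{\tau}(X)Z\bigr)+\Phi\,\overline{\tau}(X)\,C,
\]
the $C$-component yields $\overline{\tau}(X)=\tau(X)+\tfrac{1}{\Phi}h(X,Z)+\tfrac{X(\Phi)}{\Phi}$, and recognizing the last summand as $d\ln|\Phi|(X)$ produces the formula for $\overline{\tau}$. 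Then the $f_\ast$-component, with $\overline{\tau}$ already determined, yields $\overline{S}X=\Phi SX-\nabla_XZ+\overline{\tau}(X)Z$.

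No serious obstacle is expected: the argument is essentially a bookkeeping exercise in linear algebra, with the only subtle point being that the contribution $h(X,Z)C$ coming from applying the original Gauss formula to the tangent field $Z$ is precisely what produces the $h(Z,\cdot)/\Phi$ correction in $\overline{\tau}$. The assumption $\Phi\ne0$ is used exactly once, to divide by $\Phi$ when reading off coefficients.
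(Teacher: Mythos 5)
Your argument is correct and complete: the paper states this theorem without proof (citing Nomizu--Sasaki), and your derivation --- substituting $\overline{C}=\Phi C+f_\ast(Z)$ into the Gauss and Weingarten formulas and matching tangential and transversal components, using symmetry of $h$ to pass from $h(X,Z)$ to $h(Z,X)$ --- is exactly the standard proof given in that reference. Nothing is missing.
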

\par If $h$ is nondegenerate, then we say that the hypersurface or the
hypersurface immersion is \emph{nondegenerate}. We have the following
\begin{thm}[\cite{NS}, Fundamental equations]\label{tw::FundamentalEquations}
For an arbitrary transversal vector field $C$ the induced
connection $\nabla$, the second fundamental form $h$, the shape
operator $S$, and the 1-form $\tau$ satisfy
the following equations:
\begin{align}
\label{eq::Gauss}&R(X,Y)Z=h(Y,Z)SX-h(X,Z)SY,\\
\label{eq::Codazzih}&(\nabla_X h)(Y,Z)+\tau(X)h(Y,Z)=(\nabla_Y h)(X,Z)+\tau(Y)h(X,Z),\\
\label{eq::CodazziS}&(\nabla_X S)(Y)-\tau(X)SY=(\nabla_Y S)(X)-\tau(Y)SX,\\
\label{eq::Ricci}&h(X,SY)-h(SX,Y)=2d\tau(X,Y).
\end{align}
\end{thm}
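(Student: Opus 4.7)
The plan is to derive all four equations from a single ingredient: the ambient connection $\D$ on $\R^{n+1}$ is flat, so for any vector field $W$ defined along $f$ one has
\[
\D_X \D_Y W - \D_Y \D_X W - \D_{[X,Y]} W = 0.
\]
I would apply this identity for two choices of $W$ and, in each case, split the result into its tangential and transversal components with respect to the decomposition $T_{f(x)}\R^{n+1} = f_\ast T_xM \oplus \R C$. The torsion-freeness of $\nabla$ and the symmetry of $h$, recorded in the preliminaries, drop out immediately from the same philosophy applied to $\D_X f_\ast Y - \D_Y f_\ast X = f_\ast[X,Y]$, and I would invoke them freely below.

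For the substitution $W = f_\ast Z$, expanding twice via \eqref{eq::FormulaGaussa} and \eqref{eq::FormulaWeingartena} yields
\[
\D_X\D_Y f_\ast Z = f_\ast(\nabla_X\nabla_Y Z) + h(X,\nabla_Y Z)\,C + (Xh(Y,Z))\,C + h(Y,Z)\bigl(-f_\ast(SX) + \tau(X)C\bigr).
\]
Skew-symmetrizing in $X,Y$ and subtracting $\D_{[X,Y]} f_\ast Z = f_\ast(\nabla_{[X,Y]}Z) + h([X,Y],Z)C$, the tangential component produces exactly the Gauss equation \eqref{eq::Gauss}. The transversal component, after rewriting $Xh(Y,Z) - h(\nabla_X Y, Z) - h(Y,\nabla_X Z)$ as $(\nabla_X h)(Y,Z)$, produces the Codazzi equation \eqref{eq::Codazzih} for $h$.

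A completely parallel calculation with $W = C$, using \eqref{eq::FormulaWeingartena} first and then \eqref{eq::FormulaGaussa}--\eqref{eq::FormulaWeingartena} again for the second derivative, gives
\[
\D_X \D_Y C = -f_\ast(\nabla_X SY) - h(X,SY)\,C + (X\tau(Y))\,C + \tau(Y)\bigl(-f_\ast(SX) + \tau(X)C\bigr).
\]
Skew-symmetrizing in $X,Y$ and subtracting $\D_{[X,Y]} C$, the tangential component yields the Codazzi equation \eqref{eq::CodazziS} for $S$ after identifying $(\nabla_X S) Y = \nabla_X(SY) - S(\nabla_X Y)$, while the transversal component rearranges directly into $h(X,SY) - h(SX,Y) = 2 d\tau(X,Y)$, i.e.\ \eqref{eq::Ricci}.

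The proof is essentially bookkeeping; there is no geometric obstruction once one commits to the flatness identity and the component decomposition. The only point where carelessness causes trouble is the recognition of $(\nabla_X h)(Y,Z)$ and $(\nabla_X S)(Y)$ inside the coefficients of $C$ and $f_\ast(\cdot)$: failing to group the derivative of the tensor with its connection terms obscures that one has actually produced the tensorial equations \eqref{eq::Codazzih} and \eqref{eq::CodazziS} rather than mere pointwise identities. That is the main obstacle, and it is resolved simply by applying the product rule carefully to $h(Y,Z)$ and $SY$ before separating components.
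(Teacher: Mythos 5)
Your derivation is correct and is exactly the standard argument from the cited reference [NS]; the paper itself offers no proof, simply quoting the theorem from Nomizu--Sasaki, and your expansion of the flatness identity $\D_X\D_Y W-\D_Y\D_X W-\D_{[X,Y]}W=0$ for $W=f_\ast Z$ and $W=C$, split into tangential and transversal parts, reproduces all four equations. The only convention-dependent point is the factor $2$ in the Ricci equation, which fixes $d\tau(X,Y)=\tfrac12\bigl(X\tau(Y)-Y\tau(X)-\tau([X,Y])\bigr)$, and your transversal component for $W=C$ matches it.
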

The equations (\ref{eq::Gauss}), (\ref{eq::Codazzih}),
(\ref{eq::CodazziS}), and (\ref{eq::Ricci}) are called the
equation of Gauss, Codazzi for $h$, Codazzi for $S$ and Ricci,
respectively.
\par For a hypersurface immersion $f\colon M\rightarrow \R^{n+1}$
a transversal vector field $C$ is said to be \emph{equiaffine}
(resp. \emph{locally equiaffine}) if $\tau=0$ (resp. $d\tau=0$).

\section{Almost paracontact structures}
\par A $(2n+1)$-dimensional manifold $M$ is said to have an
\emph{almost paracontact structure} if there exist on $M$ a tensor
field $\varphi$ of type (1,1), a vector field $\xi$ and a 1-form
$\eta$ which satisfy
\begin{align}
\varphi^2(X)&=X-\eta(X)\xi,\\
\eta(\xi)&=1
\end{align}
for every $X\in TM$
and the tensor field $\varphi$ induces an almost para-complex structure on the distribution $\DD=\operatorname{ker}\eta$. That is the eigendistributions $\DD ^{+},\DD ^{-}$ corresponding to the eigenvalues $1,-1$
of $\varphi$ have equal dimension $n$. Let $\nabla$ be a connection on $M$. We say that an almost paracontact structure $\FiXiEta$ is \emph{$\nabla$-parallel}
if $\nabla\varphi=0$, $\nabla\eta=0$ and $\nabla\xi=0$.
\par
By $\Ct$ we denote the real algebra of para-complex numbers, then the free $\Ct$-module $\Ct^{n+1}$ is a para-complex vector space.
We always assume that $\R^{2n+2}\cong\widetilde{\mathbb{C}}^{n+1}$ is endowed with
the standard para-complex structure $\widetilde{J}$ given by:
$$
\widetilde{J}(x_1,\ldots,x_{n+1},y_1,\ldots,y_{n+1})=(y_1,\ldots,y_{n+1},x_1,\ldots,x_{n+1}).
$$
For more details on para-complex geometry we refer to \cite{CFG,LS}.
\par Let $\dim M=2n+1$ and $f\colon M\rightarrow \R^{2n+2}$ be an affine hypersurface. Let $C$ be a transversal vector field on $M$. We say that $C$ is
\emph{$\widetilde{J}$-tangent} if $\widetilde{J}C_x\in f_\ast(T_xM)$ for every $x\in M$.
We also define a distribution $\DD$ on $M$ as the biggest $\widetilde{J}$-invariant distribution on $M$, that is
$$
\DD_x:=f_\ast^{-1}(f_\ast(T_xM)\cap \widetilde{J}(f_\ast(T_xM)))
$$
for every $x\in M$. We have that $\dim\DD _x\geq 2n$. If for some $x$ the $\dim\DD _x=2n+1$ then $\DD _x=T_xM$ and it is not possible to find a $\widetilde{J}$-tangent transversal vector field in a neighbourhood of $x$. Since we only study hypersurfaces with a $\widetilde{J}$-tangent transversal vector field, then we always have $\dim\DD=2n$. The distribution $\DD$ is smooth as an intersection of two smooth distributions and because $\dim \DD$ is constant.  A vector field $X$ is called a \emph{$\DD$-field}
if $X_x\in\DD_x$ for every $x\in M$. We use the notation $X\in\DD$ for vectors as well as for $\DD$-fields.

\par To simplify the writing, we will be omitting $f_\ast$ in front of vector fields in most cases.
\par Let $f\colon M\rightarrow \R^{2n+2}$ be an affine
hypersurface with a $\widetilde{J}$-tangent transversal vector field $C$. Then
we can define a vector field $\xi$, a 1-form $\eta$ and a tensor field
$\varphi$ of type (1,1) as follows:
\begin{align}
&\xi:=\widetilde{J}C;\\
\label{etanaD::eq::0}&\eta|_\DD=0 \text{ and } \eta(\xi)=1; \\
&\varphi|_\DD=\widetilde{J}|_\DD \text{ and } \varphi(\xi)=0.
\end{align}
It is easy to see that $(\varphi,\xi,\eta)$ is an almost paracontact
structure on $M$. This structure is called the \emph{induced almost
paracontact structure}.
Using Theorem \ref{tw::ChangeOfTransversalField} one may prove the following:
\begin{lem}[\cite{S1}]\label{lm::ZmianaStrukturyPrawieparakontaktowej}
Let $C$ be a $\widetilde{J}$-tangent transversal vector field. Then any other $\widetilde{J}$-tangent transversal vector field $\overline{C}$ has a form:
$$
\overline{C}=\Phi C+f_\ast Z,
$$
where $\Phi\neq 0$ and $Z\in\DD$. Moreover, if
$(\varphi,\xi,\eta)$ is an almost paracontact structure induced by $C$, then $\overline{C}$ induces an almost paracontact structure
$(\overline\varphi,\overline\xi,\overline\eta)$, where
$$
\begin{cases}
\overline\xi=\Phi\xi+\varphi Z,\\
\overline\eta=\frac{1}{\Phi}\eta,\\
\overline\varphi=\varphi-\eta(\cdot)\frac{1}{\Phi}Z.
\end{cases}
$$
\end{lem}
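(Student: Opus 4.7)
The plan is to separate the statement into its structural part (the form of $\overline{C}$) and its computational part (the formulas for the induced structure), and to handle each by direct verification from the definitions in Section~3.

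For the first part, I would invoke Theorem~\ref{tw::ChangeOfTransversalField}: every transversal vector field can be written as $\overline{C}=\Phi C+f_\ast Z$ for some nowhere-vanishing function $\Phi$ and some tangent vector field $Z$, so the only content is to show that $\widetilde J$-tangency of $\overline C$ is equivalent to $Z\in\DD$. Applying $\widetilde J$ and using that $\widetilde JC=\xi\in f_\ast(TM)$ (because $C$ is $\widetilde J$-tangent), we obtain $\widetilde J\overline C=\Phi\xi+\widetilde J f_\ast Z$. Hence $\overline C$ is $\widetilde J$-tangent if and only if $\widetilde J f_\ast Z\in f_\ast(TM)$; since $\widetilde J^{2}=\id$, this is equivalent to $f_\ast Z\in f_\ast(TM)\cap\widetilde J(f_\ast(TM))$, i.e., to $Z\in\DD$.

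For the second part, the key observation is that the distribution $\DD$ is defined purely in terms of $TM$ and $\widetilde J$, so it is the same for every $\widetilde J$-tangent transversal vector field. Therefore the structure induced by $\overline C$ is characterised by $\overline\xi=\widetilde J\overline C$, $\overline\eta|_\DD=0$, $\overline\eta(\overline\xi)=1$, $\overline\varphi|_\DD=\widetilde J|_\DD$, and $\overline\varphi(\overline\xi)=0$. A direct computation gives $\overline\xi=\widetilde J(\Phi C+f_\ast Z)=\Phi\xi+\varphi Z$, since $Z\in\DD$ forces $\widetilde J Z=\varphi Z$.

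Finally, I would verify that the proposed formulas $\overline\eta=\tfrac{1}{\Phi}\eta$ and $\overline\varphi=\varphi-\tfrac{1}{\Phi}\eta(\cdot)Z$ satisfy these characterising conditions. The 1-form $\tfrac{1}{\Phi}\eta$ vanishes on $\DD$, and $\tfrac{1}{\Phi}\eta(\Phi\xi+\varphi Z)=1$ because $\varphi Z\in\DD$. For $\overline\varphi$, its restriction to $\DD$ equals $\varphi$, and $\overline\varphi(\overline\xi)=\varphi(\Phi\xi+\varphi Z)-Z=\varphi^{2}Z-Z=(Z-\eta(Z)\xi)-Z=0$, using $\varphi\xi=0$, the identity $\varphi^{2}=\id-\eta\otimes\xi$, and $\eta(Z)=0$ for $Z\in\DD$. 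There is no serious obstacle; the only conceptual point is the transversal-independence of $\DD$, which reduces the lemma to a mechanical check against the defining relations of an induced almost paracontact structure.
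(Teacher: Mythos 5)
Your argument is correct and follows exactly the route the paper indicates (the lemma is quoted from \cite{S1} with no proof given here beyond the remark that it follows from Theorem \ref{tw::ChangeOfTransversalField}): you use that theorem to get the general form $\overline{C}=\Phi C+f_\ast Z$, characterise $\widetilde J$-tangency as $Z\in\DD$ via $\widetilde J^2=\id$, and then verify the three formulas against the defining relations of the induced structure, correctly using that $\DD$ is independent of the choice of transversal field. All the computations check out, including the key step $\overline\varphi(\overline\xi)=\varphi^2Z-Z=0$.
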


For an induced almost paracontact structure we have the following theorem
\begin{thm}[\cite{S3}]\label{tw::Wzory}
Let $f\colon M\rightarrow \mathbb{R}^{2n+2}$ be an affine hypersurface with a $\widetilde{J}$-tangent transversal vector field $C$.
If $(\varphi,\xi,\eta)$ is an induced almost paracontact structure on $M$
then the following equations hold:
\begin{align}
\label{Wzory::eq::1}&\eta(\nabla_XY)=h(X,\varphi Y)+X(\eta(Y))+\eta(Y)\tau(X),\\
\label{Wzory::eq::2}&\varphi(\nabla_XY)=\nabla_X\varphi Y-\eta(Y)SX-h(X,Y)\xi,\\
\label{Wzory::eq::3}&\eta([X,Y])=h(X,\varphi Y)-h(Y,\varphi X)+X(\eta(Y))-Y(\eta(X))\\
\nonumber &\qquad\qquad\quad+\eta(Y)\tau(X)-\eta(X)\tau(Y),\\
\label{Wzory::eq::4}&\varphi([X,Y])=\nabla_X\varphi Y-\nabla_Y\varphi X+\eta(X)SY-\eta(Y)SX,\\
\label{Wzory::eq::5}&\eta(\nabla_X\xi)=\tau(X),\\
\label{Wzory::eq::6}&\eta(SX)=-h(X,\xi)
\end{align}
for every $X,Y\in \X(M)$.
\end{thm}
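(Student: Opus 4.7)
The plan is to exploit the single key identity
\[
\widetilde{J}Y = \varphi Y + \eta(Y)\,C \qquad (Y\in\X(M)),
\]
which follows immediately from the definitions: for $Y\in\DD$ we have $\eta(Y)=0$ and $\varphi Y = \widetilde{J}Y$, while for $Y=\xi$ we have $\widetilde{J}\xi=\widetilde{J}^2 C = C$ (since $\widetilde{J}^2=\id$) and $\varphi\xi=0$, $\eta(\xi)=1$; the general case follows by linearity together with the decomposition $Y=\varphi^2 Y+\eta(Y)\xi$. Once this decomposition is in place, everything else reduces to differentiating it and reading off components.

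First I would derive (\ref{Wzory::eq::1}) and (\ref{Wzory::eq::2}) simultaneously. Since $\widetilde{J}$ is parallel with respect to the flat connection $\D$ on $\R^{2n+2}$, we have $\D_X(\widetilde{J}Y)=\widetilde{J}(\D_X Y)$. Expanding the left-hand side using the displayed identity and the Gauss/Weingarten formulas (\ref{eq::FormulaGaussa})--(\ref{eq::FormulaWeingartena}) gives
\[
\D_X(\varphi Y + \eta(Y)C) = \nabla_X\varphi Y - \eta(Y)SX + \bigl[h(X,\varphi Y)+X(\eta(Y))+\eta(Y)\tau(X)\bigr]C,
\]
while the right-hand side equals
\[
\widetilde{J}(\nabla_X Y)+h(X,Y)\widetilde{J}C = \varphi(\nabla_X Y)+\eta(\nabla_X Y)C + h(X,Y)\xi.
\]
Comparing the tangent and transversal components (which are independent because $C$ is transversal) yields (\ref{Wzory::eq::2}) from the tangent part and (\ref{Wzory::eq::1}) from the transversal part.

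For (\ref{Wzory::eq::3}) and (\ref{Wzory::eq::4}), I would use that $\nabla$ is torsion-free, so $[X,Y] = \nabla_X Y - \nabla_Y X$. Applying $\eta$ and $\varphi$ respectively to this identity and substituting (\ref{Wzory::eq::1}), (\ref{Wzory::eq::2}) gives the two commutator formulas immediately (the $h(X,Y)\xi$ terms cancel by symmetry of $h$).

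Finally, the last two identities are specializations. Setting $Y=\xi$ in (\ref{Wzory::eq::1}) and using $\varphi\xi=0$, $\eta(\xi)=1$ (constant, so $X(\eta(\xi))=0$) gives (\ref{Wzory::eq::5}). Setting $Y=\xi$ in (\ref{Wzory::eq::2}) gives $\varphi(\nabla_X\xi) = -SX - h(X,\xi)\xi$; applying $\eta$ and noting that $\eta\circ\varphi = 0$ (because $\varphi$ takes values in $\DD=\ker\eta$, by $\DD$-invariance of $\widetilde{J}$) yields (\ref{Wzory::eq::6}). There is no real obstacle here; the only nontrivial idea is spotting the decomposition $\widetilde{J}Y = \varphi Y + \eta(Y)C$, after which the theorem is essentially a bookkeeping exercise in separating tangent and transversal parts.
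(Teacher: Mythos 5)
Your proof is correct: the identity $\widetilde{J}f_\ast Y=f_\ast(\varphi Y)+\eta(Y)C$ is exactly the right starting point, and differentiating it with $\D_X$, splitting into tangential and transversal components via the Gauss and Weingarten formulas, and then specializing does yield all six equations (including the use of $\eta\circ\varphi=0$, which holds since $\varphi$ takes values in $\DD=\ker\eta$). Note that the paper itself gives no proof of this theorem --- it is quoted from \cite{S3} --- but your argument is the standard derivation and there is nothing to fault in it.
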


\section{Parallel induced almost paracontact structures}
In this section we always assume that $(\varphi,\xi,\eta)$ is an almost paracontact structure induced by a $\Jt$-tangent
transversal vector field $C$ and $\nabla, h, S, \tau$ are affine objects induced by $C$ as well. Sometimes we denote a transversal vector field by $\overline{C}$ or even $\overline{\overline{C}}$. In such cases we use adequate (i. e. with bars) notation of induced objects.
\par When $\varphi$ is $\nabla$-parallel we have the following
\begin{lem}\label{lm::NablaPhiRowneZero}
Let  $(\FI, \xi ,\eta)$ be an induced almost paracontact structure such that $\nabla\FI=0$. Then
\begin{align}
\label{eq::fi1}&h|_{\DD\times\DD}=0,\\
\label{eq::fi2}&h(\xi,X)=h(X,\xi)=0\qquad\text{ for all $X\in\DD$},\\
\label{eq::fi3}&S|_\DD=0,\\
\label{eq::fi4}&S\xi=-h(\xi,\xi)\xi,\\
\label{eq::fi5}&d\tau=0,\\
\label{eq::fi6}&R=0.
\end{align}
\end{lem}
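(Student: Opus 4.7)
The plan is to distill everything from a single master identity obtained by comparing the hypothesis $\nabla\FI=0$ with formula \eqref{Wzory::eq::2} of Theorem \ref{tw::Wzory}. Since $\FI(\nabla_XY)=\nabla_X\FI Y$, equation \eqref{Wzory::eq::2} collapses to
\begin{equation*}
\eta(Y)\,SX + h(X,Y)\,\xi = 0 \qquad \text{for all } X,Y\in\XM. \tag{$\ast$}
\end{equation*}
All six assertions will be read off of $(\ast)$ together with \eqref{Wzory::eq::6}, the Ricci equation \eqref{eq::Ricci}, and the Gauss equation \eqref{eq::Gauss}.

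First I would apply $\eta$ to $(\ast)$: using $\eta(\xi)=1$ and $\eta(SX)=-h(X,\xi)$ from \eqref{Wzory::eq::6}, this yields
\begin{equation*}
h(X,Y) = \eta(Y)\,h(X,\xi).
\end{equation*}
Taking $Y\in\DD$ gives $h(X,Y)=0$ for all $X$; by symmetry of $h$ this produces \eqref{eq::fi1} at once, and taking $X\in\DD$, $Y=\xi$ (or vice versa) gives \eqref{eq::fi2}.

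Next I would return to $(\ast)$ and set $Y=\xi$. Using $\eta(\xi)=1$, this becomes
\begin{equation*}
SX = -h(X,\xi)\,\xi.
\end{equation*}
Specializing $X\in\DD$ and invoking \eqref{eq::fi2} yields \eqref{eq::fi3}; taking $X=\xi$ yields \eqref{eq::fi4}.

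The remaining two identities are now short substitutions. For \eqref{eq::fi5}, plug $SX=-h(X,\xi)\xi$ into the Ricci equation \eqref{eq::Ricci}; by symmetry of $h$ both terms $h(X,SY)$ and $h(SX,Y)$ equal $-h(X,\xi)h(Y,\xi)$, so $2d\tau(X,Y)=0$. For \eqref{eq::fi6}, substitute both $SX=-h(X,\xi)\xi$ and $h(X,Z)=\eta(Z)h(X,\xi)$ into the Gauss equation \eqref{eq::Gauss}: each of the two resulting terms equals $\eta(Z)h(X,\xi)h(Y,\xi)\xi$ up to sign, and they cancel, giving $R=0$. I do not expect any real obstacle here; the only care needed is making sure that the rewriting $h(X,Z)=\eta(Z)h(X,\xi)$ is used so that the Gauss equation visibly collapses, rather than leaving a residual expression in $S$.
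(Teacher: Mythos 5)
Your proof is correct and follows essentially the same route as the paper: both extract the master identity $\eta(Y)SX+h(X,Y)\xi=0$ from \eqref{Wzory::eq::2} under $\nabla\FI=0$, read off \eqref{eq::fi1}--\eqref{eq::fi4} by specializing $X,Y$, and then substitute into the Ricci and Gauss equations for \eqref{eq::fi5}--\eqref{eq::fi6}. Your detour through $\eta$ and \eqref{Wzory::eq::6} to get $h(X,Y)=\eta(Y)h(X,\xi)$ is unnecessary (taking $Y\in\DD$ in the master identity already forces $h(X,Y)\xi=0$, hence $h(X,Y)=0$), but it is valid and changes nothing of substance.
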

\begin{proof}
From the formula (\ref{Wzory::eq::2}) we have
$$
(\nabla_X\varphi)(Y)=\eta(Y)SX+h(X,Y)\xi
$$
for all $X,Y\in\XM$. Since $\nabla\varphi=0$
we get $h(X,Y)=0$ and $h(\xi,Y)=0$ for every $X,Y\in\DD$. Now, taking
 $X\in\DD$ and~$Y=\xi$ we have $SX=0$. Taking $X=Y=\xi$
we easily get $S\xi=-h(\xi,\xi)\xi$. The equation (\ref{eq::fi5}) follows immediately from the Ricci equation (\ref{eq::Ricci}). The last equation is an immediate consequence of the Gauss equation and (\ref{eq::fi1})--(\ref{eq::fi3}).
\end{proof}
By Lemma \ref{lm::NablaPhiRowneZero} we have that the transversal vector field $C$ is locally equiaffine, that is there exists (at least locally) a non-vanishing
function $\Phi$, such that $\overline{C}=\Phi C$ is equiaffine. Of course $\overline{C}$ is $\Jt$-tangent.
Now, using Theorem \ref{tw::ChangeOfTransversalField} and Lemma
\ref{lm::ZmianaStrukturyPrawieparakontaktowej} we get the following corollary
\begin{cor}\label{wn::ONablaPhi}
 Let $C$ be a $\Jt$-tangent transversal vector field such that
$\nabla\FI=0$ and let $\Phi$ be a nowhere vanishing function on $M$. Let us denote by $\overline C$ the transversal vector field $\Phi C$, then $\overline\nabla\overline\FI=0$ (actually $\overline\nabla=\nabla$ and $\overline\varphi=\varphi$). It means that the condition $\nabla\varphi=0$ is the direction property. In particular, locally we can always choose $C$ equiaffine.
\end{cor}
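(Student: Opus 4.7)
The plan is to view the corollary as two essentially separate statements and to settle each by a direct application of the change-of-transversal-vector-field formulas already recorded in Theorem \ref{tw::ChangeOfTransversalField} and Lemma \ref{lm::ZmianaStrukturyPrawieparakontaktowej}, together with the structural consequences of $\nabla\varphi=0$ collected in Lemma \ref{lm::NablaPhiRowneZero}.

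First I would show the invariance claim $\overline{\nabla}=\nabla$ and $\overline{\varphi}=\varphi$. Writing $\overline{C}=\Phi C$ corresponds to the special case $Z=0$ in Theorem \ref{tw::ChangeOfTransversalField}, which gives $\overline{\nabla}_XY=\nabla_XY-\frac{1}{\Phi}h(X,Y)\cdot 0=\nabla_XY$, and similarly the formula in Lemma \ref{lm::ZmianaStrukturyPrawieparakontaktowej} reduces to $\overline{\varphi}=\varphi-\eta(\cdot)\frac{1}{\Phi}\cdot 0=\varphi$. Composing these two equalities yields $\overline{\nabla}\overline{\varphi}=\nabla\varphi=0$. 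In other words the hypothesis $\nabla\varphi=0$ depends only on the direction spanned by the $\Jt$-tangent transversal vector field, not on its normalization.

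For the final sentence I would pass to the local existence of an equiaffine representative. By Lemma \ref{lm::NablaPhiRowneZero}, equation (\ref{eq::fi5}), we have $d\tau=0$. So locally around any given point there exists a smooth function $g$ with $\tau=dg$. The change-of-transversal-field formula for $\tau$ with $Z=0$ reads
\[
\overline{\tau}=\tau+d\ln|\Phi|,
\]
so choosing $\Phi:=e^{-g}$ (which is nowhere vanishing and smooth) yields $\overline{\tau}=0$, i.e.\ $\overline{C}=\Phi C$ is equiaffine while, by the first part, still inducing the same $\varphi$ and the same $\nabla$.

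I do not anticipate a genuine obstacle: the only point that needs any thought is checking that the particular change of the form $\overline{C}=\Phi C$ (i.e.\ the vanishing-$Z$ case) is enough to preserve both $\nabla$ and $\varphi$, which is immediate from the two boxed formulas. The local equiaffine choice is a standard Poincar\'e-lemma argument once $d\tau=0$ is in hand, and the latter was already established in the preceding lemma, so no further computation involving $h$, $S$ or the induced paracontact objects is required here.
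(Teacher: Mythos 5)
Your proposal is correct and follows essentially the same route as the paper: the paper likewise derives the invariance $\overline{\nabla}=\nabla$, $\overline{\varphi}=\varphi$ from the $Z=0$ case of Theorem \ref{tw::ChangeOfTransversalField} and Lemma \ref{lm::ZmianaStrukturyPrawieparakontaktowej}, and obtains the local equiaffine representative from $d\tau=0$ (equation (\ref{eq::fi5})) exactly as you do. No gaps.
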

We shall prove

\begin{lem}\label{lm::NablaEtaRowneZero}
Let  $(\FI, \xi ,\eta)$ be an induced almost paracontact structure such that $\nabla \eta =0$. Then
\begin{align}
\label{eq::eta1} &h|_{\DD \times \DD}=0, &\\
\label{eq::eta2} &h(\xi,X)=h(X,\xi)=0 &\qquad\text{for every
$X\in
\DD $},\\
\label{eq::eta3} &\tau =0,&\\
\label{eq::eta4} &\nabla _{X}Y\in \DD &\qquad\text{for every
$X,Y\in
\DD $},\\
\label{eq::eta5} &\nabla _{X}\xi\in\DD &\qquad\text{for every
$X\in
\XM $},\\
\label{eq::eta6} &\nabla _{\xi}X\in\DD &\qquad\text{for every
$X\in
\DD $},\\
\label{eq::eta9} &X(h(\xi,\xi))=0 &\qquad\text{for every $X\in
\DD $}.
\end{align}
\end{lem}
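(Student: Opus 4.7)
The plan is to use the identity $(\nabla_X\eta)(Y)=X(\eta(Y))-\eta(\nabla_XY)=0$, i.e.\ $\eta(\nabla_XY)=X(\eta(Y))$, and compare it with formula \eqref{Wzory::eq::1} of Theorem \ref{tw::Wzory}, which reads $\eta(\nabla_XY)=h(X,\varphi Y)+X(\eta(Y))+\eta(Y)\tau(X)$. Subtracting gives the master identity
\begin{equation*}
h(X,\varphi Y)+\eta(Y)\tau(X)=0\qquad\text{for all }X,Y\in\XM.
\end{equation*}
Plugging $Y\in\DD$ (so $\eta(Y)=0$) yields $h(X,\varphi Y)=0$; since $\varphi|_{\DD}$ is an isomorphism of $\DD$, this forces $h(X,W)=0$ for every $X\in\XM$ and $W\in\DD$, which is precisely \eqref{eq::eta1} and (using symmetry of $h$) \eqref{eq::eta2}. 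Plugging $Y=\xi$ gives $h(X,\varphi\xi)+\tau(X)=0$, and since $\varphi\xi=0$ we obtain $\tau=0$, which is \eqref{eq::eta3}.

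For the invariance statements \eqref{eq::eta4}--\eqref{eq::eta6} the quickest route is to compute $\eta$ on the relevant covariant derivatives directly from $\nabla\eta=0$. For $X,Y\in\DD$ we have $\eta(\nabla_XY)=X(\eta(Y))=X(0)=0$, giving \eqref{eq::eta4}. For \eqref{eq::eta5} use \eqref{Wzory::eq::5}: $\eta(\nabla_X\xi)=\tau(X)=0$ (or equivalently $0=(\nabla_X\eta)(\xi)=X(1)-\eta(\nabla_X\xi)$). For \eqref{eq::eta6}, taking $X\in\DD$ one has $\eta(\nabla_\xi X)=\xi(\eta(X))=0$.

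The last identity \eqref{eq::eta9} is the only place where one has to combine the previously obtained facts with a structural equation, and is the step I expect to be slightly less mechanical. Since $\tau=0$, the Codazzi equation \eqref{eq::Codazzih} reduces to $(\nabla_Xh)(Y,Z)=(\nabla_Yh)(X,Z)$. Evaluating this at $Y=Z=\xi$ with an arbitrary $X\in\DD$ gives
\begin{equation*}
X\bigl(h(\xi,\xi)\bigr)-2h(\nabla_X\xi,\xi)=\xi\bigl(h(X,\xi)\bigr)-h(\nabla_\xi X,\xi)-h(X,\nabla_\xi\xi).
\end{equation*}
Using \eqref{eq::eta2} we have $h(X,\xi)=0$; using \eqref{eq::eta5} we have $\nabla_X\xi\in\DD$ so $h(\nabla_X\xi,\xi)=0$; using \eqref{eq::eta6} we get $\nabla_\xi X\in\DD$, hence $h(\nabla_\xi X,\xi)=0$; and by \eqref{eq::eta5} also $\nabla_\xi\xi\in\DD$, so $h(X,\nabla_\xi\xi)=0$ by \eqref{eq::eta1}. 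The equation collapses to $X(h(\xi,\xi))=0$, which is \eqref{eq::eta9}, and completes the proof.
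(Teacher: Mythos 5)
Your proof is correct and follows essentially the same route as the paper: derive $h(X,\varphi Y)=-\eta(Y)\tau(X)$ from $\nabla\eta=0$ and formula \eqref{Wzory::eq::1}, specialize to get \eqref{eq::eta1}--\eqref{eq::eta3}, read off \eqref{eq::eta4}--\eqref{eq::eta6} from $\eta(\nabla_XY)=X(\eta(Y))$, and obtain \eqref{eq::eta9} from the Codazzi equation for $h$ with exactly the same cancellations. The only (harmless) cosmetic difference is that you get \eqref{eq::eta1} and \eqref{eq::eta2} in one stroke by noting that $\varphi|_{\DD}$ is an isomorphism, where the paper substitutes specific arguments.
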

\begin{proof}
Since $\nabla\eta=0$ we have
\begin{equation}\label{eq::NabEtaZero1}
\eta(\nabla_XY)=X(\eta(Y))
\end{equation}
for every $X,Y\in \XM$. Now, using the formula (\ref{Wzory::eq::1}) we get
\begin{equation}\label{eq::NabEtaZero2}
h(X,\FI Y)=-\eta(Y)\tau(X)
\end{equation}
for every $X,Y\in \XM$. Hence, if $X,Y\in\DD$, then $h(X,\FI Y)=0$, what proves (\ref{eq::eta1}).
Taking $X=\xi$ and $Y\in\DD$ in (\ref{eq::NabEtaZero2}) we easily get (\ref{eq::eta2}). On the other hand taking
 $Y=\xi$ we have $\tau(X)=0$, that is (\ref{eq::eta3}). The formulas (\ref{eq::eta4})--(\ref{eq::eta6})
can be directly obtained from (\ref{eq::NabEtaZero1}). To prove (\ref{eq::eta9}) let us note that from the Codazzi equation for $h$ (and using (\ref{eq::eta3})) we have
\begin{align*}
(\nabla_Xh)(\xi,\xi)=(\nabla_\xi h)(X,\xi)=\xi(h(X,\xi))-h(\nabla_\xi X,\xi)-h(X,\nabla_\xi\xi).
\end{align*}
Now, if we take $X\in\DD$ and use (\ref{eq::eta1})--(\ref{eq::eta2}) we get
$h(X,\xi)=0$ and $h(X,\nabla_\xi\xi)=0$, whereas (\ref{eq::eta6}) implies that we also have $h(\nabla_\xi X,\xi)=0$.
Thus, we obtain
$$
0=(\nabla_Xh)(\xi,\xi)=X(h(\xi,\xi))-2h(\nabla_X\xi,\xi)
$$
for every $X\in\DD$. Now, applying (\ref{eq::eta5}) in the above formula we get
$$
X(h(\xi,\xi))=0
$$
for every $X\in\DD$. This finishes the proof of (\ref{eq::eta9}).
\end{proof}

\begin{rem}\label{rem::1}
If $\nabla \varphi =0$ then $(\nabla _X\eta)Y=-\eta (Y)\tau (X)$.
\end{rem}
\begin{proof}
If $\nabla \varphi =0$ from (\ref{Wzory::eq::2}) we get that $h(X,\varphi Y)=0$ for all $X,Y\in \XM$. So from (\ref{Wzory::eq::1}) we obtain that $(\nabla _X\eta)Y=-\eta (Y)\tau (X)$.
\end{proof}
\begin{rem}\label{rem::2}
If $\nabla \varphi =0$ or $\nabla \eta =0$ then:
\begin{align}
\label{eq::rankh} \operatorname{rank}h\leq 1\\
\label{DDplusDminusnablaparallel} \DD, \DD^+, \DD^- \quad \text{are $\nabla$-parallel}\\
\label{DDplusDminusinvolutive} \DD, \DD^+, \DD^- \quad \text{are involutive}.
\end{align}
\end{rem}
\begin{proof}
The property (\ref{eq::rankh}) we immediately get from Lemma \ref{lm::NablaPhiRowneZero} or Lemma \ref{lm::NablaEtaRowneZero}.
By (\ref{Wzory::eq::1}) we have
\begin{align*}
\eta(\nabla_XY)=h(X,\varphi Y)
\end{align*}
for every $X\in\XM$ and $Y\in\DD$. Now (\ref{eq::fi1})--(\ref{eq::fi2}) (or (\ref{eq::eta1})--(\ref{eq::eta2})) imply that $\eta(\nabla_XY)=0$ for $X\in\XM$ and $Y\in\DD$, that is $\DD$ is $\nabla$-parallel. Using formula (\ref{Wzory::eq::2}) we get
\begin{align*}
\varphi(\nabla_XY)=\nabla_X\varphi Y=\nabla_XY
\end{align*}
for $X\in\XM$ and $Y\in\DD^+$, so $\nabla_XY\in\DD^+$. Similarly we obtain that $\nabla_XY\in\DD^-$ for $X\in\XM$ and $Y\in\DD^-$, that is both $\DD^+$ and $\DD^-$ are $\nabla$-parallel. Now (\ref{DDplusDminusinvolutive}) easily follows from (\ref{DDplusDminusnablaparallel}) and the fact that $\nabla$ is torsion-free.
\end{proof}

\begin{exa}
Let $f\colon \R^3\rightarrow \R^4$ be given by the formula
$$
f(x,y,z):=
\left[
\begin{matrix}
x+y\\
\sinh z\\
x-y\\
\cosh z
\end{matrix}
\right].
$$
Let $\{\partial_{x},\partial_{y},\partial_z\}$ be the canonical basis on $\R^3$ generated by the coordinate system $(x,y,z)$.
It easily follows that $f$ is an immersion and $\Jt f_x=f_x$, $\Jt f_y=-f_y$.
Moreover it is easy to see that
$$
C\colon \R^3\ni (x,y,z)\mapsto
\left[
\begin{matrix}
x\\
\sinh z\\
x\\
\cosh z
\end{matrix}
\right]
\in\R^4
$$
is a transversal $\Jt$-tangent vector field, since $C=\Jt f_z+xf_x$.
 Moreover, we have the following equalities
$$
\tau=0,\qquad S(\partial_x) =-\partial_x,\qquad S(\partial_y)=0,\qquad S(\partial_z)=-\partial_z
$$
and
$$
h=\left[
\begin{matrix}
0 & 0 & 0 \\
0 & 0 & 0 \\
0 & 0 & 1
\end{matrix}
\right].
$$
Now using the formula (\ref{Wzory::eq::1}) and the fact that $\partial_x\in\DD^+$, $\partial_y\in\DD^-$ and $\xi=\partial_z+x\partial_x$ we obtain that $\nabla\eta=0$, since $\varphi (\partial_z)=-x\partial_x$. However $\nabla \varphi \neq 0$. Indeed,
from (\ref{Wzory::eq::2}) we get
\begin{align*}
(\nabla _{\partial_z}\varphi )\partial_z&=\eta (\partial_z)S\partial_z+h(\partial_z,\partial_z)\xi
=-\partial_z+\xi=x\partial_x
\end{align*}
We also have
$$
R(\partial_x,\partial_z)\partial_z=h(\partial_z,\partial_z)S\partial_x-h(\partial_x,\partial_z)S\partial_z=-\partial_x,
$$
so $\nabla$ is not flat.
Now, let us consider $f$ with the transversal vector field
$$
\overline C\colon \R^3\ni (x,y,z)\mapsto
\left[
\begin{matrix}
0\\
\sinh z\\
0\\
\cosh z
\end{matrix}
\right]
\in\R^4.
$$
 Of course $\overline{C}$ is $\widetilde{J}$-tangent. In a similar way as above we compute that induced by $\overline C$ the almost paracontact structure $(\overline\varphi,\overline\xi,\overline\eta)$ is $\overline\nabla$-parallel. In particular $\overline\nabla$ is flat.\\
 Finally let us define
$$
N\colon \R^3\ni (x,y,z)\mapsto
\left[
\begin{matrix}
0\\
\frac{\sinh z}{\sqrt{\cosh^2z+\sinh^2z}}\\
0\\
-\frac{\cosh z}{\sqrt{\cosh^2z+\sinh^2z}}
\end{matrix}
\right]
\in\R^4.
$$
It is not difficult to see that $N$ is the normal field (in classical Riemannian sense) for $f$, since is orthogonal to $f_x$, $f_y$ and $f_z$ and $<N,N>=1$. Moreover $\Jt N$ is not tangent (if only $z\neq 0$), that is $N$ is not $\Jt$-tangent.
\end{exa}
The above example shows that in general condition $\nabla \eta=0$ is weaker than $\nabla \varphi =0$, however we shall show that if $\nabla \eta=0$ for some $\Jt$-tangent transversal vector field $C$ we can (at least locally) find another equiaffine $\Jt$-tangent transversal vector field $\overline{C}$ such that whole structure $(\overline{\varphi},\overline{\xi},\overline{\eta})$ is $\overline{\nabla}$-parallel. In order to prove it we will need the following lemmas

\begin{lem}\label{lm::FieldW}
If $\nabla\eta=0$ then there exists a vector field $W\in\X(M)$ such that the connection $\overline\nabla$ defined by
$$
\overline\nabla_XY:=\nabla_XY+h(X,Y)W
$$
is flat.
\end{lem}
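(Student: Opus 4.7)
The plan is to compute the curvature of $\overline{\nabla}$ directly and use the special structure guaranteed by Lemma \ref{lm::NablaEtaRowneZero} and Remark \ref{rem::2} to reduce the flatness of $\overline{\nabla}$ to a first-order linear PDE for $W$, which can then be solved by a Frobenius-type argument on the leaves of $\DD$.

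First I would expand $\overline{R}(X,Y)Z = \overline{\nabla}_X\overline{\nabla}_YZ - \overline{\nabla}_Y\overline{\nabla}_XZ - \overline{\nabla}_{[X,Y]}Z$ using $\overline{\nabla}_XY = \nabla_XY + h(X,Y)W$. Besides $R(X,Y)Z$, this produces three groups of additional terms: a $\nabla h$ contribution, a $\nabla W$ contribution, and an $h(\cdot,W)$ contribution. The Codazzi equation (\ref{eq::Codazzih}), simplified by $\tau=0$ from (\ref{eq::eta3}), kills the $\nabla h$ contribution, and the Gauss equation (\ref{eq::Gauss}) converts $R(X,Y)Z$ into shape-operator form. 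After regrouping one arrives at the compact factorization
\begin{equation*}
\overline{R}(X,Y)Z = h(Y,Z)\,F(X) - h(X,Z)\,F(Y),\qquad F(X):=SX+\nabla_XW+h(X,W)\,W.
\end{equation*}

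Next I would use the rank-one structure of $h$ recorded in (\ref{eq::eta1})--(\ref{eq::eta2}), namely $h(X,Y)=\eta(X)\eta(Y)\,h(\xi,\xi)$. Substituting this into the factorization shows that $\overline{R}\equiv 0$ is equivalent to $F(X)=\eta(X)F(\xi)$ for all $X$, i.e.\ $F|_{\DD}\equiv 0$, with $F(\xi)$ completely unconstrained. I would then look for $W$ inside $\DD$. Since $\eta(W)=0$, the $h(X,W)$ term vanishes and the condition collapses to the linear first-order system
\begin{equation*}
\nabla_XW=-SX\qquad\text{for every }X\in\DD.
\end{equation*}
Both sides are consistently $\DD$-valued: $\nabla_XW\in\DD$ because $\DD$ is $\nabla$-parallel (Remark \ref{rem::2}), and $SX\in\DD$ for $X\in\DD$ because $\eta(SX)=-h(X,\xi)=0$ by (\ref{Wzory::eq::6}) and (\ref{eq::eta2}).

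The crucial step is solving this system. Since $\DD$ is involutive (Remark \ref{rem::2}), it integrates to a local foliation near any chosen $p\in M$. On each leaf $L$ the induced connection is flat, since for $X,Y,V\in\DD$ the Gauss equation and (\ref{eq::eta1}) give $R(X,Y)V = h(Y,V)SX - h(X,V)SY = 0$. Moreover, the integrability of $\nabla^LW=-S|_{\DD}$ amounts to the identity $(\nabla_XS)Y=(\nabla_YS)X$ for $X,Y\in\DD$, which is exactly the Codazzi equation (\ref{eq::CodazziS}) with $\tau=0$. Both conditions being met, the Poincar\'e lemma for flat connections supplies, locally on $L$, a smooth $\DD$-valued solution. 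To assemble $W$ on a neighbourhood of $p$, I would pick a curve through $p$ transverse to $\DD$ (for instance an integral curve of $\xi$), set $W\equiv 0$ along it, and extend leaf-by-leaf; the resulting $W$ is smooth near $p$ and, by the factorization of the first step, yields $\overline{R}=0$. The main obstacle, and the heart of the argument, is this integrability verification: it combines the Codazzi equations for both $S$ and $h$, the vanishing of $\tau$, and the rank-one/involutive structure of the induced geometry into a single coherent statement; once those are in place, the construction of $W$ is routine.
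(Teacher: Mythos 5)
Your argument is correct in its essentials but follows a genuinely different route from the paper. The paper's proof is much shorter: it takes $\overline{C}$ to be the Euclidean unit normal field of $f$, writes it as $\overline{C}=\Phi C+f_\ast Z_0$, and uses the standard relation $g(\overline{S}X,Y)=\overline{h}(X,Y)=\frac{1}{\Phi}h(X,Y)$ for the metric normal, where $g$ is the (positive definite) first fundamental form; since $h$ annihilates $\DD$ in each argument, nondegeneracy of $g$ forces $\overline{S}|_\DD=0$, and the Gauss equation together with $\rank h\leq 1$ gives $\overline{R}=0$ immediately, so $W:=-\frac{1}{\Phi}Z_0$ works. You instead compute $\overline{R}$ directly and obtain the factorization $\overline{R}(X,Y)Z=h(Y,Z)F(X)-h(X,Z)F(Y)$ with $F(X)=SX+\nabla_XW+h(X,W)W$, which is correct (the $\nabla h$ terms cancel by Codazzi with $\tau=0$, and the Gauss equation absorbs $R$); the reduction to the linear system $\nabla_XW=-SX$ for $X\in\DD$ with $W\in\DD$ is also right, and the integrability checks you invoke (Codazzi for $S$ with $\tau=0$, plus $R(X,Y)V=0$ for $X,Y\in\DD$ coming from $h|_{\DD\times\X(M)}=0$) do go through. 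What your route buys is independence from the ambient Euclidean structure; what it costs is length and, more importantly, globality. Your $W$ is constructed only on a neighbourhood of a given point --- it depends on the choice of a transversal curve along which $W=0$, and there is no canonical way to glue these local solutions --- whereas the statement asserts $W\in\X(M)$ and the paper's $W$ is global because the unit normal field is. The local version suffices for the subsequent lemmas, which are all local, but as written your proof establishes a strictly weaker statement; either note this explicitly or pass to the metric-normal argument to get $W$ on all of $M$.
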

\begin{proof}
From Theorem \ref{tw::ChangeOfTransversalField} there exist a non-vanishing function $\Phi$ and a vector field $Z_0\in\X(M)$ such that the normal vector field $\overline C$ is given by
$$
\overline C:=\Phi C+f_\ast Z_0.
$$
Let $\overline\nabla$, $\overline h$, $\overline S$ and $\overline \tau\equiv 0$ be affine objects induced by $\overline C$ and let $g$ be the first fundamental form on $M$ (i.e. Riemannian metric on $M$ induced from the canonical inner product $<\cdot,\cdot>$ on $\R^{2n+2}$).
We have
$$
g(\overline SX,Y)=\overline h(X,Y)=\frac{1}{\Phi}h(X,Y)
$$
for all $X,Y\in\XM$. Since $h(X,Y)=0$ for all $X\in\DD$ and $Y\in\XM$ and $g$ is the Riemannian metric on $M$ the above implies that $\overline S=0$ on $\DD$.
Now the Gauss equation
$$
\overline R(X,Y)Z=\overline h(Y,Z)\overline SX -\overline h(X,Z)\overline SY
$$
implies that $\overline R=0$, that is $\overline \nabla$ is flat. Since $\nabla$ and $\overline\nabla$ are related (see Th. \ref{tw::ChangeOfTransversalField}) by
$$
\overline\nabla_XY=\nabla_XY-\frac{1}{\Phi}h(X,Y)Z_0
$$
it is enough to take $W:=-\frac{1}{\Phi}Z_0$.
\end{proof}

\begin{lem}\label{lm::mapL1}
Let $f\colon M\rightarrow \mathbb{R}^{2n+2}$ be an affine hypersurface with a $\widetilde{J}$-tangent transversal vector field $C$ and
$(\varphi,\xi,\eta)$ be an induced almost paracontact structure on $M$. If $\nabla \eta =0$ then for every $p\in M$ there exist a neighbourhood $U$ of $p$ and a local basis $\{\partial_{1},\ldots ,\partial_{{2n+1}}\}$ on $U$  such that $\{\partial_{1},\ldots ,\partial_{{n}}\}$ span the distribution $\DD^+$,
$\{\partial_{{n+1}},\ldots ,\partial_{{2n}}\}$ span the distribution $\DD^-$ and $\nabla _{\partial_{i}}{\partial _{j}}=0$ for $i,j=1\ldots 2n$,
$\nabla _{\partial_{i}}{\partial _{2n+1}}=\nabla _{\partial_{2n+1}}{\partial _{i}}=0$ for $i=1,\ldots ,2n$.
\end{lem}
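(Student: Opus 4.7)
The plan is to use Lemma \ref{lm::FieldW} to replace $\nabla$ by a flat connection that coincides with $\nabla$ in all the directions relevant to the statement, and then to build the desired frame via parallel transport.

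First, I would record that under the hypothesis $\nabla\eta=0$ the formulas of Lemma \ref{lm::NablaEtaRowneZero} give $h(X,Y)=\eta(X)\eta(Y)h(\xi,\xi)$ for all $X,Y\in\XM$; in particular $h(X,Y)=0$ as soon as one of $X,Y$ lies in $\DD$. Taking the vector field $W$ furnished by Lemma \ref{lm::FieldW}, the associated flat connection
\[
\overline{\nabla}_XY=\nabla_XY+h(X,Y)W
\]
therefore satisfies $\overline{\nabla}_XY=\nabla_XY$ whenever $X\in\DD$ or $Y\in\DD$. Since Remark \ref{rem::2} asserts that $\DD,\DD^+,\DD^-$ are $\nabla$-parallel, this identity promotes them immediately to $\overline{\nabla}$-parallel distributions. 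Note also that the correction term $h(X,Y)W$ is symmetric in $X,Y$, so $\overline{\nabla}$ is torsion-free in addition to being flat.

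Second, shrinking if necessary to a simply connected neighbourhood $U$ of $p$, flatness of $\overline{\nabla}$ makes $\overline{\nabla}$-parallel transport path-independent on $U$, so every tangent vector at $p$ extends uniquely to a $\overline{\nabla}$-parallel vector field on $U$. I would choose bases $v_1,\ldots,v_n$ of $\DD^+_p$ and $v_{n+1},\ldots,v_{2n}$ of $\DD^-_p$, together with any $v_{2n+1}\in T_pM$ completing these to a basis, and define $\partial_i$ on $U$ as the $\overline{\nabla}$-parallel extension of $v_i$. Because $\DD^+$ and $\DD^-$ are $\overline{\nabla}$-parallel, parallel transport keeps $\partial_1,\ldots,\partial_n$ inside $\DD^+$ and $\partial_{n+1},\ldots,\partial_{2n}$ inside $\DD^-$; hence $\{\partial_1,\ldots,\partial_{2n+1}\}$ is a local frame of $TU$ adapted to the required splitting.

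Finally, to translate $\overline{\nabla}$-parallelism of this frame into the $\nabla$-condition required by the lemma, I would simply rewrite
\[
\nabla_{\partial_i}\partial_j=\overline{\nabla}_{\partial_i}\partial_j-h(\partial_i,\partial_j)W=-\eta(\partial_i)\eta(\partial_j)h(\xi,\xi)W.
\]
Every pair $(i,j)$ appearing in the conclusion has $i\le 2n$ or $j\le 2n$, so at least one of $\partial_i,\partial_j$ belongs to $\DD$ and therefore has vanishing $\eta$-image, making the right-hand side zero. I do not expect a substantive obstacle: the only subtlety is making sure that the correction term in $\overline{\nabla}-\nabla$ vanishes on the required index pairs, and this is exactly the information that $\nabla\eta=0$ provides through the rank-one structure of $h$. (No statement is made about $\nabla_{\partial_{2n+1}}\partial_{2n+1}$, which in general equals $-\eta(\partial_{2n+1})^2h(\xi,\xi)W\neq 0$.)
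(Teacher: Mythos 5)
Your argument is correct and follows essentially the same route as the paper: both rest on the flat connection $\overline{\nabla}$ of Lemma \ref{lm::FieldW}, the $\nabla$-parallelism of $\DD^{+}$ and $\DD^{-}$ from Remark \ref{rem::2}, and the fact that $\overline{\nabla}$ and $\nabla$ agree whenever one argument lies in $\DD$. The only difference is cosmetic: you obtain the adapted frame by parallel-transporting a basis adapted to $\DD^{+}_p\oplus\DD^{-}_p$ from the single point $p$, whereas the paper takes an arbitrary $\overline{\nabla}$-parallel frame and extracts the adapted one by decomposing each member into its $\DD^{+}$, $\DD^{-}$ and transversal parts.
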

\begin{proof}
Let $p\in M$ and let $\overline\nabla$ be the connection from Lemma \ref{lm::FieldW}. Since $\overline\nabla$ is flat, in some neighborhood $U$ of $p$ there exist a basis
$\partial_1,\ldots,\partial_{2n+1}$ such that $\overline\nabla_{\partial_i}\partial_j=0$ for $i,j=1,\ldots,2n+1$. In particular we have $\overline\nabla_X\partial_i=0$
for $i=1,\ldots,2n+1$ and $X\in\X(U)$. Without loss of generality (shrinking the neighborhood $U$ if needed) we may
assume that $\partial_{2n+1}\notin\DD$. Then for $i=1,\ldots,2n$ we have the decomposition
$$
\partial_i=\partial_i^++\partial_i^-+\alpha_i\partial_{2n+1}
$$
where $\partial_i^+\in\DD^+$, $\partial_i^-\in\DD^-$ and $\alpha_i$ are some smooth functions on $U$.
Now for any $X\in \XU$ we have
\begin{align*}
0&=\overline\nabla_X{\partial_i}=\overline\nabla_X{\partial_i^+}+\overline\nabla_X{\partial_i^-}+\overline\nabla_X(\alpha_i\partial_{2n+1})\\
&=\overline\nabla_X{\partial_i^+}+\overline\nabla_X{\partial_i^-}+X(\alpha_i)\partial_{2n+1}\\
&=\nabla_X{\partial_i^+}+\nabla_X{\partial_i^-}+X(\alpha_i)\partial_{2n+1},
\end{align*}
where the last equality is an immediate consequence of the fact that $\overline\nabla_XY=\nabla_XY$ for all $X\in\XM$, $Y\in\DD$.
Since $\DD^+$ and $\DD^-$ are $\nabla$-parallel we obtain that
$\nabla_X{\partial_i^+}=0$ and $\nabla_X{\partial_i^-}=0$ for any $X\in\XU$ and $\alpha_i=\const$ for $i=1,\ldots,2n$.
In particular we have
\begin{align*}
\nabla _{\partial_i^+}{\partial_j^+}=0,\quad \nabla _{\partial_i^-}{\partial_j^-}=0,\quad \nabla _{\partial_i^+}{\partial_j^-}=0,\quad
\nabla _{\partial_i^-}{\partial_j^+}=0
\end{align*}
for $i,j=1,\ldots,2n$.
Let us consider
$$
\Span\{\partial_1^+,\ldots,\partial_{2n}^+\}\subset \DD^+ \quad\text{and}\quad \Span\{\partial_1^-,\ldots,\partial_{2n}^-\}\subset\DD^-.
$$
Since every $\partial_i$ is a linear combination of elements from $\{\partial_1^+,\ldots,\partial_{2n}^+\}$, $\{\partial_1^-,\ldots,\partial_{2n}^-\}$
and $\partial_{2n+1}$ we have
$$
\Span\{\partial_1^+,\ldots,\partial_{2n}^+\}\oplus \Span\{\partial_1^-,\ldots,\partial_{2n}^-\}\oplus \Span\{\partial_{2n+1}\}=TM.
$$
In particular
$\dim \Span\{\partial_1^+,\ldots,\partial_{2n}^+\}=\dim \Span\{\partial_1^-,\ldots,\partial_{2n}^-\}=n$.

Now (around $p$) we can choose $2n$ linearly independent vector fields $\{\partial_1',\ldots,\partial_{2n}'\}$ such that
$\partial_i'\in \{\partial_1^+,\ldots,\partial_{2n}^+\}$ and $\partial_{i+n}'\in \{\partial_1^-,\ldots,\partial_{2n}^-\}$ for $i=1,\ldots,n$.
We have
$$
\nabla_{\partial_i'}{\partial_j'}=0
$$
for $i,j=1,\ldots,2n$.
Note also that $\nabla_{\partial_i'}\partial_{2n+1}=\bar\nabla_{\partial_i'}\partial_{2n+1}=0$ and
$\nabla_{\partial_{2n+1}}{\partial_i'}=0$ for $i=1,\ldots,2n$.
Finally we see that the basis $\{\partial_1',\ldots,\partial_{2n}',\partial_{2n+1}\}$ has required properties.
\end{proof}

\begin{lem}\label{lm::mapL2}
Let $f\colon M\rightarrow \mathbb{R}^{2n+2}$ be an affine hypersurface with a $\widetilde{J}$-tangent transversal vector field $C$ and let
$(\varphi,\xi,\eta)$ be an induced almost paracontact structure on $M$. If $\nabla \eta =0$ then for every $p\in M$ there exist a neighbourhood $U$ of $p$
and a $\Jt$-tangent transversal vector field $\overline{C}$ defined on $U$ and a local coordinate system on $U$ $(x_1,\ldots ,x_{2n},y)$
such that $\partial_{x_1},\ldots,\partial_{x_n}\in \DD^+, \partial_{x_{n+1}},\ldots,\partial_{x_{2n}}\in \DD^-$ and the following conditions are satisfied:
\begin{align}
\label{eq::bar1}&\overline{\xi}=\partial _{y}; &\\
\label{eq::bar2}&\overline{\nabla}\overline{\eta}=0; &\\
\label{eq::bar3}&\overline{\nabla}_{\partial_{x_i}}\partial_{x_j}=0 &\text{for }i,j=1,\ldots,2n;\\
\label{eq::bar4}&\overline{\nabla}_{\partial_{x_i}}\partial_{y}=\overline{\nabla}_{\partial_{y}}\partial_{x_i}=0 &\text{for }i=1,\ldots,2n.
\end{align}
where $(\overline{\varphi},\overline{\xi},\overline{\eta})$ and $\overline{\nabla}$ are the almost paracontact structure and the affine connection induced by $\overline{C}$, respectively.
\end{lem}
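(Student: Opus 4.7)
I would prove Lemma \ref{lm::mapL2} by combining Lemma \ref{lm::mapL1} with two standard moves: a simultaneous rectification to obtain coordinates, and a precisely tuned change of transversal vector field. First, applying Lemma \ref{lm::mapL1} at $p$ yields a local frame $\{\partial_{1},\dots,\partial_{2n+1}\}$ with $\partial_{1},\dots,\partial_{n}\in\DD^{+}$, $\partial_{n+1},\dots,\partial_{2n}\in\DD^{-}$, and the stated $\nabla$-vanishings. Because $\nabla$ is torsion-free and Lemma \ref{lm::mapL1} gives both $\nabla_{\partial_{i}}\partial_{j}=0$ and $\nabla_{\partial_{j}}\partial_{i}=0$ for all relevant pairs, every bracket $[\partial_{i},\partial_{j}]$ vanishes. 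By the simultaneous flow-box theorem for commuting vector fields there are local coordinates $(x_{1},\dots,x_{2n},y)$ near $p$ with $\partial_{x_{i}}=\partial_{i}$ ($i=1,\dots,2n$) and $\partial_{y}=\partial_{2n+1}$; the distribution requirement and the $\nabla$-vanishings of \eqref{eq::bar3}--\eqref{eq::bar4} (with $\nabla$ in place of $\overline{\nabla}$) then hold automatically.

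\textbf{Rescaling the $y$-coordinate.} I would next arrange $\eta(\partial_y)=1$. Set $g:=\eta(\partial_{y})$. Since $\nabla\eta=0$, $\partial_{x_{i}}(g)=\eta(\nabla_{\partial_{x_{i}}}\partial_{y})=0$ by the previous step, so $g$ depends on $y$ alone; moreover $g$ is nowhere zero because $\partial_{y}\notin\DD$ (otherwise the frame would span only the $2n$-dimensional $\DD$). Replacing $y$ by $\tilde{y}:=\int g(y)\,dy$ produces $\partial_{\tilde{y}}=\tfrac{1}{g}\partial_{y}$ with $\eta(\partial_{\tilde{y}})=1$, and since $g$ is a function of $y$ only, this substitution preserves all the $\nabla$-vanishings from the previous step.

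\textbf{Choosing $\overline{C}$.} Put $V:=\partial_{\tilde{y}}-\xi$; then $\eta(V)=0$, so $V\in\DD$. I would define
$$
\overline{C}:=C+\varphi V.
$$
Because $\varphi V\in\DD$, this has the form $\Phi C+f_{\ast}Z$ with $\Phi=1$ and $Z=\varphi V\in\DD$, so by Lemma \ref{lm::ZmianaStrukturyPrawieparakontaktowej} it is a $\Jt$-tangent transversal vector field with $\overline{\eta}=\eta$ and
$$
\overline{\xi}=\xi+\varphi(\varphi V)=\xi+V=\partial_{\tilde{y}},
$$
which gives \eqref{eq::bar1}. Theorem \ref{tw::ChangeOfTransversalField} with $\Phi=1$ and $Z=\varphi V$ reads $\overline{\nabla}_{X}Y=\nabla_{X}Y-h(X,Y)\varphi V$, and because $\eta(\varphi V)=0$ and $\nabla\eta=0$, a direct check shows $(\overline{\nabla}_{X}\overline{\eta})(Y)=X(\eta(Y))-\eta(\nabla_{X}Y)+h(X,Y)\eta(\varphi V)=0$, proving \eqref{eq::bar2}. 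For \eqref{eq::bar3}--\eqref{eq::bar4} I would observe that Lemma \ref{lm::NablaEtaRowneZero} gives $h(X,\cdot)=0$ whenever $X\in\DD$; since every $\partial_{x_{i}}\in\DD$ and $\partial_{\tilde{y}}=\xi+V$ with $V\in\DD$, the correction term $h(\cdot,\cdot)\varphi V$ vanishes on every pair appearing in \eqref{eq::bar3}--\eqref{eq::bar4}, and the conditions reduce to the $\nabla$-vanishings already in hand.

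\textbf{Main obstacle.} Everything past the frame construction is bookkeeping except the reason why the reparametrization $y\mapsto\tilde{y}$ is forced: a short calculation shows that for a general change $\overline{C}=\Phi C+f_{\ast}Z_{0}$ with $Z_{0}\in\DD$ one has $(\overline{\nabla}_{X}\overline{\eta})(Y)=X(\tfrac{1}{\Phi})\eta(Y)$, so $\overline{\nabla}\overline{\eta}=0$ demands $\Phi$ constant, which combined with $\overline{\xi}=\partial_{y}$ forces $\eta(\partial_{y})$ to be constant. The new $y$-coordinate is what makes this compatible with the coordinate frame produced by Lemma \ref{lm::mapL1}.
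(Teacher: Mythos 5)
Your proof is correct and takes essentially the same route as the paper: apply Lemma \ref{lm::mapL1}, normalize so that $\eta(\partial_y)=1$ (you rectify the frame first and then rescale the $y$-coordinate, whereas the paper rescales $\partial_{2n+1}$ to $Y=\frac{1}{\eta(\partial_{2n+1})}\partial_{2n+1}$ before rectifying --- an immaterial reordering), and then pass to $\overline{C}=C+f_\ast(\varphi Z)$ with $Z=\partial_y-\xi\in\DD$, verifying \eqref{eq::bar1}--\eqref{eq::bar4} exactly as the paper does via Lemma \ref{lm::NablaEtaRowneZero} and Lemma \ref{lm::ZmianaStrukturyPrawieparakontaktowej}.
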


\begin{proof}
By Lemma \ref{lm::mapL1} for any $p\in M$ there exist a neighbourhood $U$ of $p$ and a local basis $\{\partial_{1},\ldots ,\partial_{{2n+1}}\}$ on $U$
with properties described in the Lemma \ref{lm::mapL1}.
Of course $\eta (\partial_{2n+1})\neq 0$.
Since $\nabla _{\partial_{i}}{\partial _{2n+1}}=0$ for $i=1,\ldots,2n$, using (\ref{Wzory::eq::1}) we obtain
\begin{align}\label{eq::etaNablaxiy}
0=\eta (\nabla _{\partial_{i}}{\partial _{2n+1}})=\partial_{i}(\eta (\partial_{2n+1}))
\end{align} for $i=1,\ldots,2n$.
Let us define
$$
Y:=\frac{1}{\eta (\partial_{2n+1})}\cdot\partial_{2n+1}.
$$
Thanks to (\ref{eq::etaNablaxiy}) we get
\begin{align*}
\nabla _{\partial_{i}}Y=\nabla_Y\partial_{i}=0
\end{align*} for $i=1,\ldots,2n$.
In particular $[\partial_{i},Y]=0$ for $i=1,\ldots,2n$ and there exist a local coordinate system $(x_1,\ldots,x_{2n},y)$ around $p$  such that
$\partial_i=\partial_{x_{i}}$ for $i=1,\ldots,2n$ and $Y=\partial_{y}$. We also have $\partial_{x_{i}}\in \DD^+$ and $\partial_{x_{n+i}}\in \DD^-$ for $i=1,\ldots,n$.
Now, we have
\begin{align*}
\eta (\partial_{y})=\eta (Y)=1=\eta (\xi)
\end{align*}
that is there exists $Z\in \DD$ such that $\partial_{y}=\xi +Z$.
Let us define
\begin{align*}
\overline{C}:=C+f_{\ast}(\varphi Z).
\end{align*}
By Theorem \ref{tw::ChangeOfTransversalField} and Lemma \ref{lm::ZmianaStrukturyPrawieparakontaktowej} we have
\begin{align*}
\overline{\xi}=\xi+Z=\partial_{y}, \quad \overline{\eta}=\eta
\end{align*}
and
\begin{align*}
\overline{\nabla}_XY=\nabla_XY-h(X,Y)\varphi Z \quad \text{for} \quad X,Y\in\XM.
\end{align*}
In consequence
\begin{align*}
\overline{\nabla}_{\partial_{x_i}}\partial_{x_j}=\nabla_{\partial_{x_i}}\partial_{x_j}=0
\end{align*}
and
\begin{align*}
\overline{\nabla}_{\partial_{x_i}}\partial_{y}=\overline{\nabla}_{\partial_{y}}\partial_{x_i}=\nabla_{\partial_{y}}\partial_{x_i}=\nabla_{\partial_{x_i}}\partial_{y}=0
\end{align*}
for $i,j=1,\ldots,2n$.
Finally we obtain
\begin{align*}
(\overline{\nabla}_X\overline{\eta})(Y)&=X(\overline{\eta}(Y))-\overline{\eta}(\overline{\nabla}_XY)=X(\eta (Y))-\eta (\nabla_XY-h(X,Y)\varphi Z)\\
&=(\nabla_X\eta)(Y)+h(X,Y)\eta(\varphi Z)=(\nabla_X\eta)(Y)=0.
\end{align*}
\end{proof}

Now we can prove the following

\begin{thm}\label{tw::mapL3}
Let $f\colon M\rightarrow \mathbb{R}^{2n+2}$ be an affine hypersurface with a $\widetilde{J}$-tangent transversal vector field $C$ and
$(\varphi,\xi,\eta)$ be an induced almost paracontact structure on $M$. If $\nabla \eta =0$ then for every $p\in M$ there exist a neighbourhood $U$ of $p$ and a $\Jt$-tangent transversal vector field $\overline{\overline{C}}$ defined on $U$ such that the induced almost paracontact structure
$(\overline{\overline{\varphi}},\overline{\overline{\xi}},\overline{\overline{\eta}})$ is $\overline{\overline{\nabla}}$-parallel.
Moreover around $p$ there exists a local coordinate system $(x_1,\ldots,x_{2n},x_{2n+1})$ such that
$\partial_{x_1},\ldots,\partial_{x_n}\in \DD^+, \partial_{x_{n+1}},\ldots,\partial_{x_{2n}}\in \DD^-$, $\nabla_{\partial_{x_i}}\partial_{x_j}=0$ for $i,j=1,\ldots,2n+1$ and $\partial_{x_{2n+1}}=\overline{\overline{\xi}}$.
\end{thm}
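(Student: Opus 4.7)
The plan is to take the transversal field $\overline{C}$ from Lemma~\ref{lm::mapL2} and apply one further change
\begin{equation*}
\overline{\overline{C}}:=\overline{C}+f_\ast W,\qquad W\in\overline{\DD},
\end{equation*}
where $W$ is chosen so that the induced almost paracontact structure becomes $\overline{\overline{\nabla}}$-parallel; the coordinate chart will then be obtained by $\overline{\overline{\nabla}}$-parallel transport from $p$ of a basis adapted to $\DD^\pm$ and $\overline{\overline{\xi}}$.

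Fix $\overline{C}$ and coordinates $(x_1,\dots,x_{2n},y)$ as in Lemma~\ref{lm::mapL2}; then the only possibly nonzero Christoffel symbol of $\overline{\nabla}$ is $A:=\overline{\nabla}_{\partial_y}\partial_y$, and using (\ref{Wzory::eq::1}) with $\overline{\tau}=0$ one sees $A\in\overline{\DD}$. For any $W\in\overline{\DD}$, Theorem~\ref{tw::ChangeOfTransversalField} and Lemma~\ref{lm::ZmianaStrukturyPrawieparakontaktowej} yield $\overline{\overline{h}}=\overline{h}$, $\overline{\overline{\eta}}=\overline{\eta}$, $\overline{\overline{\xi}}=\overline{\xi}+\overline{\varphi}W$, $\overline{\overline{\nabla}}_XY=\overline{\nabla}_XY-\overline{h}(X,Y)W$ and $\overline{\overline{\tau}}=0$. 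A short check gives $\overline{\overline{\nabla}}\,\overline{\overline{\eta}}=0$, so by Remark~\ref{rem::2} the distributions $\overline{\overline{\DD}}{}^{\pm}$ are $\overline{\overline{\nabla}}$-parallel. It remains to secure $\overline{\overline{\nabla}}\,\overline{\overline{\varphi}}=0$ (once this holds, $\overline{\overline{\nabla}}\,\overline{\overline{\xi}}=0$ follows automatically from $\overline{\overline{\varphi}}\,\overline{\overline{\xi}}=0$ and $\overline{\overline{\eta}}(\overline{\overline{\xi}})=1$), and a repetition of the bookkeeping in Lemma~\ref{lm::NablaPhiRowneZero} reduces this to
\begin{equation*}
\overline{\overline{S}}|_{\overline{\overline{\DD}}}=0\qquad\text{and}\qquad \overline{\overline{S}}\,\overline{\overline{\xi}}=-\overline{h}(\partial_y,\partial_y)\,\overline{\overline{\xi}}.
\end{equation*}

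Applying (\ref{Wzory::eq::2}) with $(X,Y)=(\partial_{x_i},\partial_y)$ gives $\overline{S}\partial_{x_i}=0$ for $i\leq 2n$, so, using $\overline{\overline{S}}=\overline{S}-\overline{\nabla}_{\cdot}W$, the first condition becomes $\overline{\nabla}_{\partial_{x_i}}W=0$, equivalently $W=\sum_{j=1}^{2n}c^j(y)\,\partial_{x_j}$. Writing $A=\sum a^j\partial_{x_j}$, the identities $\overline{\nabla}_{\partial_{x_i}}A=\overline{R}(\partial_{x_i},\partial_y)\partial_y=\overline{h}(\partial_y,\partial_y)\,\overline{S}\partial_{x_i}=0$ together with the Codazzi equation~(\ref{eq::Codazzih}) show that $a^j$ and $\overline{h}(\partial_y,\partial_y)$ depend only on $y$. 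Expanding the second condition in the $\partial_{x_j}$-basis then produces, for each $j$, the scalar linear first-order ODE
\begin{equation*}
\epsilon_j (c^j)'(y)=\overline{h}(\partial_y,\partial_y)\,c^j(y)-a^j(y),
\end{equation*}
where $\epsilon_j=\pm1$ is the $\overline{\varphi}$-eigenvalue on $\partial_{x_j}$. These are solvable in a neighbourhood of $y_p$ with arbitrary initial data, which pins down $W$ and hence $\overline{\overline{C}}$.

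With this $\overline{\overline{C}}$ the almost paracontact structure $(\overline{\overline{\varphi}},\overline{\overline{\xi}},\overline{\overline{\eta}})$ is $\overline{\overline{\nabla}}$-parallel, and by Lemma~\ref{lm::NablaPhiRowneZero} the connection $\overline{\overline{\nabla}}$ is flat. Pick $e_1,\dots,e_n\in\DD_p^+$, $e_{n+1},\dots,e_{2n}\in\DD_p^-$, $e_{2n+1}=\overline{\overline{\xi}}_p$, and extend by $\overline{\overline{\nabla}}$-parallel transport on a neighbourhood $U$ of $p$; the resulting $E_1,\dots,E_{2n+1}$ stay in the respective parallel distributions $\DD^+$, $\DD^-$, $\R\,\overline{\overline{\xi}}$, and torsion-freeness of $\overline{\overline{\nabla}}$ makes them pairwise commute, so they integrate to a chart $(x_1,\dots,x_{2n+1})$ with $\partial_{x_{2n+1}}=\overline{\overline{\xi}}$ and all Christoffels vanishing. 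The main anticipated obstacle is the auxiliary fact that both $A$ and $\overline{h}(\partial_y,\partial_y)$ are functions of $y$ alone: it is precisely this that decouples the parallelism requirements into scalar ODEs in $y$ and makes the two conditions on $\overline{\overline{S}}$ simultaneously solvable by a single $W$ of the prescribed form.
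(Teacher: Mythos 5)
Your proposal is correct and follows essentially the same route as the paper: starting from Lemma~\ref{lm::mapL2}, you add a $\DD$-valued correction $W=\sum_j c^j(y)\partial_{x_j}$ whose coefficients solve the same linear first-order ODEs in $y$ (your condition $\overline{\overline{S}}\,\overline{\overline{\xi}}=-\overline{h}(\partial_y,\partial_y)\overline{\overline{\xi}}$ is, via (\ref{Wzory::eq::2}), equivalent to the paper's $\overline{\overline{\nabla}}_{\overline{\overline{\xi}}}\overline{\overline{\xi}}=0$, and your $W$ is the paper's $\varphi Z$), and you correctly identify the key point that $A$ and $\overline{h}(\partial_y,\partial_y)$ depend only on $y$, proved by the same Gauss/Codazzi arguments. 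The only cosmetic difference is the final chart, which you build by parallel transport of the flat connection $\overline{\overline{\nabla}}$ rather than by observing directly that $\partial_{x_1},\ldots,\partial_{x_{2n}},\overline{\overline{\xi}}$ pairwise commute; both yield the stated coordinates.
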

\begin{proof}
By Lemma \ref{lm::mapL2} for any $p\in M$ there exist a neighbourhood $U$ of $p$, local coordinates $(x_1,\ldots ,x_{2n},y)$ on $U$ and  a $\Jt$-tangent transversal vector field $\overline{C}$ defined on $U$ such that (\ref{eq::bar1})--(\ref{eq::bar4}) are satisfied.
Using formula (\ref{Wzory::eq::2}), for $i=1,\ldots,2n$ we have
\begin{align*}
\overline{S}\partial_{x_i}=-\overline{\varphi}(\overline{\nabla}_{\partial_{x_i}}\overline{\xi})=-\overline{\varphi}(\overline{\nabla}_{\partial_{x_i}}\partial_y)=0,
\end{align*}
where $\overline{S}$ is the shape operator induced by $\overline{C}$. In particular $\overline{S}|\DD=0$.
From (\ref{Wzory::eq::1}) we get $\overline{\eta}(\overline{\nabla}_{\overline{\xi}}\overline{\xi})=0$ that is
$\overline{\nabla}_{\overline{\xi}}\overline{\xi}\in\DD$.
Therefore there exist smooth functions $p_i$ ($i=1,\ldots,2n$) such that
\begin{align*}
\overline{\nabla}_{\overline{\xi}}\overline{\xi}=\sum_{i=1}^{2n}p_i\partial_{x_i}.
\end{align*}
Now from the Gauss equation we obtain that for every $i=1,\ldots,2n$
\begin{align*}
0=\overline{R}(\partial_{x_i},\partial_y)\partial_y
=\overline{\nabla}_{\partial_{x_i}}\overline{\nabla}_{\partial_y}\partial_y
=\overline{\nabla}_{\partial_{x_i}}\sum_{j=1}^{2n}p_j\partial_{x_j}
=\sum_{j=1}^{2n}\partial_{x_i}(p_j)\partial_{x_j}
\end{align*}
therefore $p_j$ depends only on $y$ for $j=1,\ldots,2n$.
Let us define
\begin{align*}
Z:=\sum_{i=1}^{2n}a_i\partial_{x_i},
\end{align*}
where $a_i$ are smooth functions defined by
\begin{align}
\label{eq::ai} a_i&:=-e^{\int \overline{h}(\partial_y,\partial_y)dy}\cdot \int p_ie^{-\int \overline{h}(\partial_y,\partial_y)dy}dy\\
\label{eq::ain}a_{i+n}&:=-e^{-\int \overline{h}(\partial_y,\partial_y)dy}\cdot \int p_{i+n}e^{\int \overline{h}(\partial_y,\partial_y)dy}dy
\end{align} for $i=1,\ldots,n$.
By (\ref{eq::eta9}) we have $\partial_{x_i}(\overline{h}(\partial_y,\partial_y))=0$ for $i=1,\ldots,2n$, that is $\overline{h}(\partial_y,\partial_y)$
depends only on $y$ and in consequence $a_i$ depends only on $y$ for $i=1,\ldots,2n$.
Now we can define another transversal vector field:
\begin{align*}
\overline{\overline{C}}:=\overline{C}+f_{\ast}\varphi Z.
\end{align*}
Since $Z\in\DD$ we see that $\overline{\overline{C}}$ is $\Jt$-tangent.
First note that
\begin{align*}
\overline{\overline{\nabla}}_{\overline{\overline{\xi}}}\partial_{x_i}=0,\\
\overline{\overline{\nabla}}_{\partial_{x_i}}\overline{\overline{\xi}}=0,\\
\overline{\overline{\nabla}}_{\partial_{x_i}}\partial_{x_j}=0
\end{align*} for $i,j=1,\ldots,2n$.
Indeed, the above follows immediately from the fact that $\overline{\overline{\xi}}=\overline{\xi}+Z=\partial_y+Z$,
 $a_i$ depend only on $y$ and $\overline{\overline{\nabla}}_XY=\overline{\nabla}_XY$ if only $X\in\DD$ or $Y\in \DD$.
 Let us denote $a_i':=\frac{\partial a_i}{\partial y}$.
One may compute
\begin{align*}
\overline{\overline{\nabla}}_{\overline{\overline{\xi}}}\overline{\overline{\xi}}&=\overline{\nabla}_{\overline{\overline{\xi}}}\overline{\overline{\xi}}-\overline{h}(\overline{\overline{\xi}},\overline{\overline{\xi}})\varphi Z=\overline{\nabla}_{\overline{\xi}+Z}{(\overline{\xi}+Z)}-\overline{h}(\overline{\xi}+Z,\overline{\xi}+Z)\varphi Z\\&=\overline{\nabla}_{\overline{\xi}}\overline{\xi}+\overline{\nabla}_{\overline{\xi}}Z+\overline{\nabla}_{Z}\overline{\xi}+\overline{\nabla}_{Z}Z-\overline{h}(\overline{\xi},\overline{\xi})\varphi Z\\&=\overline{\nabla}_{\partial_y}\partial_y+\overline{\nabla}_{\partial_y}Z+\overline{\nabla}_ZZ-\overline{h}(\partial_y,\partial_y)\varphi Z\\
&=\sum_{i=1}^{2n}p_i\partial_{x_i}+\overline{\nabla}_{\partial_y}(\sum_{i=1}^{2n}a_i\partial_{x_i})-\overline{h}(\partial_y,\partial_y)\varphi (\sum_{i=1}^{2n}a_i\partial_{x_i})\\
&=\sum_{i=1}^{2n}p_i\partial_{x_i}+\sum_{i=1}^{2n}a_i'\partial_{x_i}-\overline{h}(\partial_y,\partial_y)\sum_{i=1}^na_i\partial_{x_i}+\overline{h}(\partial_y,\partial_y)\sum_{i=1}^na_{i+n}\partial_{x_{i+n}}\\
&=\sum_{i=1}^{n}p_i\partial_{x_i}+\sum_{i=1}^{n}a_i'\partial_{x_i}-\overline{h}(\partial_y,\partial_y)\sum_{i=1}^na_i\partial_{x_i}\\
&+\sum_{i=1}^{n}p_{i+n}\partial_{x_{i+n}}+\sum_{i=1}^{n}a_{i+n}'\partial_{x_{i+n}}+\overline{h}(\partial_y,\partial_y)\sum_{i=1}^na_{i+n}\partial_{x_{i+n}}=0,
\end{align*}
where the last equality easily follows from (\ref{eq::ai}) and (\ref{eq::ain}).
Now, using the above we have
\begin{align*}
(\overline{\overline{\nabla}}_{\overline{\overline{\xi}}}\overline{\overline{\varphi}})\overline{\overline{\xi}}=-\overline{\overline{\varphi}}(\overline{\overline{\nabla}}_{\overline{\overline{\xi}}}\overline{\overline{\xi}})
=0,\\
(\overline{\overline{\nabla}}_{\overline{\overline{\xi}}}\overline{\overline{\varphi}})(\partial_{x_i})=\overline{\overline{\nabla}}_{\overline{\overline{\xi}}}(\overline{\overline{\varphi}}(\partial_{x_i}))
-\overline{\overline{\varphi}}(\overline{\overline{\nabla}}_{\overline{\overline{\xi}}}\partial_{x_i})=0,\\
(\overline{\overline{\nabla}}_{\partial_{x_i}}\overline{\overline{\varphi}})(\overline{\overline{\xi}})=-\overline{\overline{\varphi}}(\overline{\overline{\nabla}}_{\partial_{x_i}}\overline{\overline{\xi}})=0,\\
(\overline{\overline{\nabla}}_{\partial_{x_i}}\overline{\overline{\varphi}})(\partial_{x_j})=0,
\end{align*}
that is $\overline{\overline{\nabla}}\overline{\overline{\varphi}}=0$.
Since $\overline{\overline{\nabla}}_{\overline{\overline{\xi}}}\overline{\overline{\xi}}=0$ and $\overline{\overline{\nabla}}_{\partial_{x_i}}\overline{\overline{\xi}}=0$ for $i=1,\ldots,2n$ we get that
$\overline{\overline{\nabla}}_{X}\overline{\overline{\xi}}=0$ for $X\in\X(U)$, that is $\overline{\overline{\nabla}}\overline{\overline{\xi}}=0$.
Since $\overline\tau=0$ Theorem \ref{tw::ChangeOfTransversalField} implies that $\overline{\overline{\tau}}=0$ and now Remark \ref{rem::1}
implies that $\overline{\overline{\nabla}}\overline{\overline{\eta}}=0$.
\par Finally note that for $\partial_{x_1},\ldots,\partial_{x_{2n}},\partial_{x_{2n+1}}:=\overline{\overline{\xi}}$ we have $[\partial_{x_i},\partial_{x_j}]=0$
for $i,j=1,\ldots,2n+1$, so there exists a local coordinate system $(x_1,\ldots,x_{2n},x_{2n+1})$ with desired properties.
The proof is completed.
\end{proof}

Now we can state the following classification theorem. Namely we have

\begin{thm}\label{thm::opostacif}
Let $f\colon M\rightarrow \mathbb{R}^{2n+2}$ be an affine hypersurface with a $\widetilde{J}$-tangent transversal vector field $C$ and
$(\varphi,\xi,\eta)$ be an induced almost paracontact structure on $M$. If $\varphi$ or $\eta$ is $\nabla$-parallel then for every $p\in M$
there exists a neighbourhood $U$ of $p$ such that $f$ can be locally expressed in the form
\begin{align}\label{eq::WzorNafNew}
f(x_1,\ldots,x_{2n},y)=x_1b_1+\cdots+x_{2n}b_{2n}+\Jt v\int\cosh {\alpha}(y)\,dy\\\nonumber+v\int\sinh {\alpha}(y) \,dy,
\end{align}
where $b_1,\ldots,b_{2n}, v\in\R^{2n+2}$, $b_1,\ldots,b_{2n}, v, \Jt v$ are linearly independent and such that $\Jt b_i=b_i$ for $i=1,\ldots, n$,
$\Jt b_i=-b_i$ for $i=n+1,\ldots, 2n$
and $\alpha$ is some smooth function. Moreover, the converse is true in the sense that for the function {\rm{(\ref{eq::WzorNafNew})}}
there exists a global $\Jt$-tangent equiaffine transversal vector field $C$ such that $(\varphi,\xi,\eta)$ is $\nabla$-parallel.
\end{thm}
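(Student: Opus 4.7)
The plan is to reduce the hypothesis to the single case $\nabla\eta=0$, apply Theorem~\ref{tw::mapL3} to obtain an adapted $\Jt$-tangent transversal field and coordinates with a fully parallel induced structure, and then integrate the Gauss--Weingarten equations direction by direction.

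First, suppose $\nabla\varphi=0$. I would bring this to the form $\nabla\eta=0$ as follows: by Lemma~\ref{lm::NablaPhiRowneZero} we have $d\tau=0$, hence locally $\tau=-d\ln|\Phi|$ for some positive function $\Phi$; applying Theorem~\ref{tw::ChangeOfTransversalField} with $Z=0$ replaces $C$ by the equiaffine field $\Phi C$, and by Corollary~\ref{wn::ONablaPhi} the equality $\nabla\varphi=0$ is preserved; Remark~\ref{rem::1} then yields $\nabla\eta=0$. So in both cases I may assume $\nabla\eta=0$. Theorem~\ref{tw::mapL3} now produces, at any chosen $p\in M$, a neighbourhood $U$, a $\Jt$-tangent transversal vector field $\overline{\overline{C}}$ whose induced structure $(\overline{\overline{\varphi}},\overline{\overline{\xi}},\overline{\overline{\eta}})$ is fully $\overline{\overline{\nabla}}$-parallel, and coordinates $(x_1,\dots,x_{2n},x_{2n+1})$ on $U$ with $\partial_{x_j}\in\DD^+$ for $j\le n$, $\partial_{x_j}\in\DD^-$ for $n<j\le 2n$, $\partial_{x_{2n+1}}=\overline{\overline{\xi}}$, and $\overline{\overline{\nabla}}_{\partial_{x_i}}\partial_{x_j}=0$ for all $i,j\in\{1,\dots,2n+1\}$. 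For convenience I write $y:=x_{2n+1}$.

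Next I read off the structural consequences in the adapted frame. Lemma~\ref{lm::NablaEtaRowneZero} (applied to $\overline{\overline{C}}$) gives $\overline{\overline{h}}|_{\DD\times\DD}=0$ and $\overline{\overline{h}}(X,\overline{\overline{\xi}})=0$ for $X\in\DD$, while the proof of Theorem~\ref{tw::mapL3} already supplies $\overline{\overline{S}}|_{\DD}=0$ and $\overline{\overline{\tau}}=0$. Inserting these into the Gauss formula $D_Xf_*Y=f_*(\overline{\overline{\nabla}}_XY)+\overline{\overline{h}}(X,Y)\overline{\overline{C}}$ shows $D_X(f_*\partial_{x_j})=0$ for every $X\in\XU$ and every $j=1,\dots,2n$, so each $f_*\partial_{x_j}$ is a constant vector $b_j\in\R^{2n+2}$, with $\Jt b_j=\pm b_j$ according to whether $\partial_{x_j}\in\DD^+$ or $\DD^-$ (via $f_*\varphi=\Jt f_*$ on $\DD$). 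Similarly the Weingarten formula gives $D_{\partial_{x_i}}\overline{\overline{C}}=0$, so $\overline{\overline{C}}$ depends only on $y$. From $\overline{\overline{\nabla}}\overline{\overline{\xi}}=0$ together with (\ref{Wzory::eq::6}) one obtains $\overline{\overline{S}}\overline{\overline{\xi}}=-k(y)\overline{\overline{\xi}}$, where $k(y):=\overline{\overline{h}}(\partial_y,\partial_y)$ is a function of $y$ alone by (\ref{eq::eta9}). The Weingarten equation along $\partial_y$ then reduces to the linear ODE
\begin{equation*}
\frac{d}{dy}\overline{\overline{C}}(y)=k(y)\,\Jt\overline{\overline{C}}(y).
\end{equation*}
Setting $\alpha(y):=\int k(y)\,dy$ and using $\Jt^2=\id$, its general solution is $\overline{\overline{C}}(y)=\cosh\alpha(y)\,v+\sinh\alpha(y)\,\Jt v$ for a constant $v\in\R^{2n+2}$, so that $f_*\partial_y=\Jt\overline{\overline{C}}=\cosh\alpha\,\Jt v+\sinh\alpha\,v$. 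One further integration in $y$, combined with $f_*\partial_{x_j}=b_j$, yields exactly (\ref{eq::WzorNafNew}); the linear independence of $\{b_1,\dots,b_{2n},v,\Jt v\}$ is forced by transversality of $\overline{\overline{C}}$ and injectivity of $f_*$, since the pair $(\overline{\overline{C}},f_*\partial_y)$ spans the same plane as $(v,\Jt v)$.

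For the converse I would set $C:=\cosh\alpha\,v+\sinh\alpha\,\Jt v$, check that it is $\Jt$-tangent and transversal, and verify $\tau=0$ and $\nabla_{\partial_{x_i}}\partial_{x_j}=0$ for all $i,j$ by a direct Gauss--Weingarten computation; these imply $\nabla\varphi=\nabla\xi=\nabla\eta=0$ immediately. I expect the only genuine obstacle in the whole argument to be the ODE step above: everything prior is a bookkeeping consequence of Theorem~\ref{tw::mapL3} and Lemma~\ref{lm::NablaEtaRowneZero}, whereas the emergence of the hyperbolic parametrization and of the distinguished two-plane $\Span\{v,\Jt v\}$ requires recognising $\Jt$ restricted to that plane as the para-complex analogue of a rotation generator and writing its flow in closed form.
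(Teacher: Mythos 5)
Your proposal is correct and follows essentially the same route as the paper: reduce to a fully $\nabla$-parallel structure via Corollary \ref{wn::ONablaPhi}, Remark \ref{rem::1} and Theorem \ref{tw::mapL3}, integrate the flat directions to get constant $b_j$ with $\Jt b_j=\pm b_j$, solve the hyperbolic first-order ODE in the $y$-direction, and verify the converse by direct Gauss--Weingarten computation. The only cosmetic difference is that you derive the ODE from the Weingarten equation for $\overline{\overline{C}}$ while the paper uses the Gauss equation $f_{yy}=h(\xi,\xi)\Jt f_y$ for $G=f_y$; since $\overline{\overline{C}}=\Jt f_y$ these are the same equation.
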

\begin{proof}
Let $p\in M$. Thanks to Theorem \ref{tw::mapL3}, Corollary  \ref{wn::ONablaPhi} and Remark \ref{rem::1}, without loss of generality,
we may assume that $(\varphi,\xi,\eta)$ is $\nabla$-parallel around $p$.
Let $(x_1,\ldots,x_{2n},x_{2n+1})$ be the local coordinate system from the Theorem \ref{tw::mapL3}.
Let us denote $x_{2n+1}=y$.
Now, by the Gauss formula we have the following system of differential equations:
\begin{align}
\label{eq::dffxixj}f_{x_ix_j}=0,\\
\label{eq::dffxiy}f_{x_iy}=0,\\
\label{eq::dffyy}f_{yy}=h(\xi,\xi)C=h(\xi,\xi)\Jt f_y
\end{align} for $i=1,\ldots,2n$.
Solving (\ref{eq::dffxixj}) and (\ref{eq::dffxiy}) we obtain
\begin{align*}
f(x_1,\ldots,x_{2n},y)=\sum_{i=1}^{2n}x_ib_i+A(y),
\end{align*}
where $b_1,\ldots,b_{2n}\in\R^{2n+2}$, $b_1,\ldots,b_{2n}$ are linearly independent and such that $\Jt b_i=b_i$ for $i=1,\ldots, n$,
$\Jt b_i=-b_i$ for $i=n+1,\ldots, 2n$ and $A$ is some smooth function depending only on variable $y$ with values in $\R^{2n+2}$.
Now (\ref{eq::dffyy}) takes the form
\begin{align}\label{eq::DEwithA}
A''=\beta\Jt A',
\end{align}
where $\beta:=h(\xi,\xi)$. Substituting $G:=A'$ we get
\begin{align}\label{eq::DEwithG}
G'=\beta\Jt G.
\end{align}
Let $\overline\beta$ be any integral of $\beta$.
First note that for every $v\in\R^{2n+2}$ the function
\begin{align}\label{eq::DEwithG-Solution}
G(y)= \Jt v\cosh \overline{\beta}(y)+v\sinh \overline{\beta}(y)
\end{align}
is the solution of (\ref{eq::DEwithG}). On the other hand, since (\ref{eq::DEwithG}) is a first order ordinary differential equation
all its solutions have the form (\ref{eq::DEwithG-Solution}).
The above imply that solutions of (\ref{eq::DEwithA}) have the form
$$
A(y)= \Jt v\int \cosh \overline{\beta}(y)dy+v\int \sinh \overline{\beta}(y)dy.
$$
Since $f$ is the immersion and $C$ is transversal we have that $b_1,\ldots,b_{2n}$, $f_y=G(y)$ and $C=\Jt f_y$ are linearly independent. In particular,
we obtain that $b_1,\ldots,b_{2n},v,\Jt v$ are linearly independent too. Indeed, if we assume that
$$
\sum_{i=1}^{2n}a_ib_i+a_{2n+1}v+a_{2n+2}\Jt v=0
$$
for some functions $a_1,\ldots,a_{2n+2}$ we get
\begin{align*}
0&=\sum_{i=1}^{2n}a_ib_i+a_{2n+1}v+a_{2n+2}\Jt v \\
&=\sum_{i=1}^{2n}a_ib_i+(a_{2n+2}\cosh \overline{\beta}(y)-a_{2n+1}\sinh \overline{\beta}(y))f_y \\
&\phantom{=}+(a_{2n+1}\cosh \overline{\beta}(y)-a_{2n+2}\sinh \overline{\beta}(y))\Jt f_y.
\end{align*}
Now, since $\{b_1,\ldots,b_{2n},f_y,\Jt f_y \}$ are linearly independent we obtain $a_1=\cdots =a_{2n}=0$ and
$$a_{2n+2}\cosh \overline{\beta}(y)-a_{2n+1}\sinh \overline{\beta}(y)=a_{2n+1}\cosh \overline{\beta}(y)-a_{2n+2}\sinh \overline{\beta}(y)=0.$$
The above implies that $a_{2n+1}=a_{2n+2}=0$.
Summarising $f$ can be locally expressed in the form:
\begin{align*}
f(x_1,\ldots,x_{2n},y)=x_1b_1+\cdots+x_{2n}b_{2n}+\Jt v\int\cosh \overline{\beta}(y)\,dy\\\nonumber+v\int\sinh \overline{\beta}(y) \,dy.
\end{align*}
Now denoting $\alpha:=\overline{\beta}$ we get the thesis.
\par In order to prove the last part of the theorem let us note that since $b_1,\ldots,b_{2n},v,\Jt v$ are linearly independent the function $f$
given by (\ref{eq::WzorNafNew}) is an immersion and $C:=\Jt f_y$ is transversal and $\Jt$-tangent. Let $(\varphi,\xi,\eta)$ be an almost paracontact structure induced by $C$.
Since $C_{x_i}=0$ and $C_{y}=\alpha'f_y$ we have $\tau=0$, $S|_\DD=0$ and $S\xi=-\alpha'\xi$.
Since $f_{x_ix_j}=0$, $f_{x_iy}=0$ and $f_{yy}=\alpha'C$ we have $h(X,Y)=0$ if only $X\in\DD$ or $Y\in\DD$ and $h(\xi,\xi)=\alpha'$.
Now using (\ref{Wzory::eq::2}) we easily obtain that $\nabla\varphi =0$. Since $C$ is equiaffine we also have $\nabla\eta=0$ and $\nabla\xi=0$.
\end{proof}

\begin{cor}\label{wn::hyperplane}
If $\nabla\varphi=0$ or $\nabla\eta=0$ and $\operatorname{rank}h=0$ on $M$ then $f$ is a piece of a hyperplane.
\end{cor}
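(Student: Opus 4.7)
The plan is to run Theorem \ref{thm::opostacif} and read off what the extra hypothesis $\rank h=0$ (i.e.\ $h\equiv 0$) forces on the explicit form of $f$.

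First I would fix $p\in M$ and invoke Theorem \ref{thm::opostacif}. This is legitimate because the change of transversal vector field $\overline C=\Phi C+f_\ast Z$ only rescales $h$ via $\overline h=\tfrac{1}{\Phi}h$, so the condition $\rank h=0$ is preserved through every reduction used in Theorem \ref{tw::mapL3}, Corollary \ref{wn::ONablaPhi} and Remark \ref{rem::1}. Around $p$ one therefore obtains coordinates $(x_1,\ldots,x_{2n},y)$ in which
\begin{equation*}
f=\sum_{i=1}^{2n}x_ib_i+\Jt v\int\cosh\alpha(y)\,dy+v\int\sinh\alpha(y)\,dy,
\end{equation*}
with $b_1,\ldots,b_{2n},v,\Jt v$ linearly independent, and from the proof of Theorem \ref{thm::opostacif} one knows that $\alpha'(y)=h(\xi,\xi)$ for the equiaffine $\Jt$-tangent transversal field underlying this parametrization.

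Next I would use $h\equiv 0$ to collapse the formula. In particular $h(\xi,\xi)=0$, so $\alpha'\equiv 0$ and $\alpha\equiv\alpha_0$ is constant. The two integrals then become linear in $y$, and the parametrization reduces to
\begin{equation*}
f(x_1,\ldots,x_{2n},y)=\sum_{i=1}^{2n}x_ib_i+y\bigl(\cosh\alpha_0\cdot\Jt v+\sinh\alpha_0\cdot v\bigr)+v_0,
\end{equation*}
for some constant $v_0\in\R^{2n+2}$. This is an affine map, and its image lies in the $(2n+1)$-dimensional affine subspace based at $v_0$ and spanned by $b_1,\ldots,b_{2n}$ together with $w:=\cosh\alpha_0\cdot\Jt v+\sinh\alpha_0\cdot v$; the latter is nonzero because $v$ and $\Jt v$ are linearly independent, so these $2n+1$ vectors are independent and $f(U)$ lies in an affine hyperplane $H_p\subset\R^{2n+2}$.

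Finally I would globalize by connectedness of $M$: whenever $U_p\cap U_q\neq\emptyset$, the set $f(U_p\cap U_q)$ is a $(2n+1)$-dimensional immersed piece contained in both $H_p$ and $H_q$, which forces $H_p=H_q$; a single hyperplane $H$ therefore contains $f(M)$. The only non-routine point is the transfer of $\rank h=0$ through the transversal-field changes in step one, but as noted this is immediate from $\overline h=\tfrac{1}{\Phi}h$; everything else is bookkeeping on the explicit form produced by Theorem \ref{thm::opostacif}.
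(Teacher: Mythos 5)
Your proof is correct and follows essentially the same route as the paper: invoke Theorem \ref{thm::opostacif}, use $h\equiv 0$ to force $\alpha'=0$, and read off that the resulting parametrization is affine with image in a hyperplane. The only cosmetic difference is that the paper derives $\alpha'=0$ by noting $f_{yy}=\alpha'\Jt f_y$ must be tangent when $h=0$ while $\Jt f_y$ is transversal, whereas you use the identification $\alpha'=h(\xi,\xi)$ together with the invariance of $\operatorname{rank}h$ under $\overline h=\tfrac{1}{\Phi}h$; both are sound, and your explicit globalization by connectedness is a reasonable addition the paper leaves implicit.
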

\begin{proof}
By Theorem \ref{thm::opostacif} $f$ can be locally expressed in the form (\ref{eq::WzorNafNew}). In particular
\begin{align*}
f_{yy}=\alpha'(y) \cdot (\Jt v\sinh \alpha(y)+v\cosh \alpha (y))=\alpha'(y)\Jt f_y.
\end{align*}
On the other hand $f_{yy}$ is tangent, since $h=0$. Since $\Jt f_y$ is transversal we obtain $\alpha '=0$, that is $\alpha=\const$ and, in consequence, $f$ is a piece of a hyperplane.
\end{proof}

We conclude this section with an example showing that the condition $\nabla\xi=0$ is much more weaker than $\nabla\varphi=0$ or $\nabla\eta=0$.

\begin{exa}
Let us consider an affine immersion $f\colon (0,\infty)^2\times\R\rightarrow \R^4$ given by the formula
$$
f(x,y,z):=
\left[
\begin{matrix}
\frac{1}{2}(x^2+y^2)\\
\sinh z\\
\frac{1}{2}(x^2-y^2)\\
\frac{1}{3}(x^3+y^3)+\cosh z
\end{matrix}
\right].
$$
It is easy to verify that
$$
C\colon (0,\infty)^2\times\R\ni (x,y,z)\mapsto
\left[
\begin{matrix}
0\\
\sinh z\\
0\\
\cosh z
\end{matrix}
\right]
\in\R^4
$$
is a $\Jt$-tangent transversal vector field for $f$.
Let $\{\partial_{x},\partial_{y},\partial_z\}$ be the canonical basis on $(0,\infty)^2\times\R$ generated by the coordinate system $(x,y,z)$ and
let $(\varphi,\xi,\eta)$ be an almost paracontact structure induced by $C$. Note that we have $\xi=\partial_z$.
By straightforward computations we obtain
$$
h=\left[
\begin{matrix}
x\cosh z & 0 & 0 \\
0 & y\cosh z & 0 \\
0 & 0 & 1
\end{matrix}
\right],
$$
so in particular $f$ is nondegenerate, that is $\rank h=3$. Now by Remark \ref{rem::2} it is not possible to find a $\Jt$-tangent transversal vector field
such that $\nabla\varphi=0$ or $\nabla\eta=0$. However, by the Gauss formula we have
$$
\nabla_{\partial_x}\partial_z=\nabla_{\partial_y}\partial_z=\nabla_{\partial_z}\partial_z=0,
$$
that is $\nabla\xi=0$.
\end{exa}

\emph{This Research was financed by the Ministry of Science and Higher Education of the Republic of Poland.}


\begin{thebibliography}{HD}




\normalsize
\baselineskip=17pt



\bibitem{CKM}
{B. Cappelletti-Montano, I. K\"upeli Erken, C. Murathan},
{\emph{Nullity conditions in paracontact geometry}},
{Diff. Geom. Appl. \textbf{30} (2012)},
{665--693}.



\bibitem{CLS}
{ V. Cort\'es, M. A. Lawn and L. Sch\"afer},
{\emph{ Affine hyperspheres associated to special para-K\"ahler manifolds}},
{ Int. J. Geom. Methods Mod. Phys. \textbf{3} (2006)},
{995--1009}.

\bibitem{Cruceanu}
{V. Cruceanu},
{\emph{Real Hypersurfaces in Complex Centro-affine Spaces}},
{Results in Mathematics \textbf{13} (1988)},
{224--234}.


\bibitem{CFG}
{V. Cruceanu, P. Fortuny, P. M. Gadea,}
{\emph{A survey on para-complex geometry}},
{Rocky Mountain J. Math. \textbf{26} (1996) 1}, {83--115}.

\bibitem{KW}
{S. Kaneyuki, F. L. Williams},
{\emph{Almost paracontact and parahodge structures on manifolds}},
{Nagoya Math. J. \textbf{99} (1985)}, {173--187}.

\bibitem{KM}
{I. K\"upeli Erken, C. Murathan},
{\emph{A Complete Study of Three-Dimensional Paracontact ($\tilde{k}, \tilde{\mu}, \tilde{v}$)-spaces}},
{Int. J. Geom. Methods. Mod. Phys. \textbf{14}(7) (2017)},
{DOI: 10.1142:/S0219887817501067}.


\bibitem{LS}
{M. A. Lawn and L. Sch\"afer},
{\emph{Decompositions of para-complex vector bundles and para-complex affine
immersions}},
{Results Math. \textbf{48} (2005)},
{246--274}.


\bibitem{NS}
{K. Nomizu and T. Sasaki},
{\emph{Affine differential geometry}},
{Cambridge University Press}.
{1994},

\bibitem{SzanSzan}
{M. Szancer, Z. Szancer},
{\emph{Real Hypersurfaces with an Induced Almost Contact Structure}},
{Colloquium Mathematicum \textbf{114} (2009)},
{41--51}.

\bibitem{S1}
{Z. Szancer},
{\emph{On $3$-dimensional $\widetilde{J}$-tangent centro-affine hypersurfaces and $\widetilde{J}$-tangent affine hyperspheres with some null-directions}},
{arXiv:1804.02396}.


\bibitem{S3}
{Z. Szancer},
{\emph{$\widetilde{J}$-tangent affine hypersurfaces with an induced almost paracontact structure}},
{arXiv:1710.10488}.

\bibitem{S4}
{Z. Szancer},
{\emph{Real hypersurfaces with parallel induced almost contact structures}},
{Ann. Polon. Math. \textbf{104.2} (2012)},
{203--215}.

\bibitem{SZ2}
{Z. Szancer},
{\emph{J-tangent affine hyperspheres with an involutive contact distribution}},
{Publ. Math. Debrecen \textbf{89/4} (2016)},
{399--413}.

\bibitem{SZ}
{Z. Szancer},
{\emph{On para-complex affine hyperspheres}},
{Results Math. \textbf{72} (2017)},
{491-–513}.

\bibitem{Z}
{S. Zamkovoy},
{\emph{Canonical connections on paracontact manifolds}},
{Ann. Glob. Anal. Geom. \textbf{36} (2009)},
{37–-60}.

\end{thebibliography}
\end{document}